\newcommand{\snb}[1]{}
\newcommand{\A}{\ensuremath{{\mathbb{A}}}}
\newcommand{\G}{\ensuremath{{\mathbb{G}}}}
\newcommand\tensor{\otimes}
\newcommand\isom{\cong}
\newcommand\bq{\begin{equation}}
\newcommand\eq{\end{equation}}
\newtheorem{proposition}{Proposition}[section]
\newtheorem{theorem}[proposition]{Theorem}
\newtheorem{corollary}[proposition]{Corollary}
\newtheorem{example}[proposition]{Example}
\newtheorem{lemma}[proposition]{Lemma}
\theoremstyle{definition}
\newtheorem{definition}[proposition]{Definition}
\theoremstyle{remark}
\newtheorem{remark}[proposition]{Remark}
\numberwithin{equation}{section}
\newcommand{\cut}[1]{}
\newcommand\hidden[1]{}
\newcommand{\cF}{\mathcal{F}}
\newcommand{\FF}{\mathbb{F}}
\newcommand{\PP}{\mathbb{P}}
\newcommand{\QQ}{\mathbb{Q}}
\newcommand{\RR}{\mathbb{R}}
\newcommand{\ZZ}{{\mathbb{Z}}}                                        %
\newcommand{\cO}{{\mathcal O}}                                        %
\newcommand{\Bl}{\operatorname{Bl}}                                   %                                                         %
\newcommand{\Cl}{\operatorname{Cl}}                                   %                                                         %
\newcommand\oo{\mathcal{O}}                                           %
\newcommand\fp{\overline{\mathbb{F}}_p}                                    %
\title{Constructing non-Mori Dream Spaces from negative curves}
\author{Javier Gonz\'alez Anaya, Jos\'e Luis Gonz\'alez and Kalle Karu}
\address{
J. Gonz\'alez-Anaya, Department of Mathematics, University of British Columbia, 
  Vancouver, BC V6T1Z2, Canada.  \newline \indent
J.L. Gonz\'alez,  Department of Mathematics, University of California, Riverside,
  Riverside, CA 92521, United States.  \newline \indent
K. Karu,
Department of Mathematics, University of British Columbia, 
  Vancouver, BC V6T1Z2, Canada.} 
\email{jga@math.ubc.ca, jose.gonzalez@ucr.edu, karu@math.ubc.ca}
\thanks{The first author was supported by CONACyT scholarship 410172.}
\thanks{The second author was supported by the UCR Academic Senate.}
\thanks{The third author was supported by a NSERC Discovery grant.}
\begin{document}
\begin{abstract}
We study blowups of weighted projective planes at a general point, and more generally blowups of toric surfaces of Picard number one. Based on the positive characteristic methods of Kurano and Nishida,  we give a general method for constructing examples of Mori Dream Spaces and non-Mori Dream Spaces among such blowups. Compared to previous constructions, this method uses the geometric properties of the varieties and applies to a number of cases. We use it to fully classify the examples coming from two families of negative curves.  

\end{abstract}
\maketitle
\setcounter{tocdepth}{1} % this just includes subsections

%\tableofcontents

%************************************************************************************************************************

%%%%%%%%%%%%%%%%%%%%%%%%%%%%%%%%%%%%%%%%%%%%%%%%%%%%%%%%%%%%%%%%%%%%%%%%%%%%%%%%%%%%
%%%%%%%%%%%%%%%%%%%%%%%%%%%%%%%%%%%%%%%%%%%%%%%%%%%%%%%%%%%%%%%%%%%%%%%%%%%%%%%%%%%%
%%%%                                                                            %%%%
%%%%                            Section 1. Introduction                         %%%%
%%%%                                                                            %%%%
%%%%%%%%%%%%%%%%%%%%%%%%%%%%%%%%%%%%%%%%%%%%%%%%%%%%%%%%%%%%%%%%%%%%%%%%%%%%%%%%%%%%
%%%%%%%%%%%%%%%%%%%%%%%%%%%%%%%%%%%%%%%%%%%%%%%%%%%%%%%%%%%%%%%%%%%%%%%%%%%%%%%%%%%%

\section{Introduction}

We work over an algebraically closed field $k$ of characteristic zero.

Recall that a variety $X$ is called a Mori Dream Space (MDS) if its Cox ring is a finitely generated $k$-algebra. In this article we study blowups of weighted projective planes $\PP(a,b,c)$ at a general point $t_0$,
\[ X=\Bl_{t_0} \PP(a,b,c).\]
The problem of determining all triples $(a,b,c)$ for which $X$ is a MDS is largely open.  
Our goal here is to construct new examples of MDS and non-MDS among such $X$, generalizing the methods in \cite{KuranoNishida, GGK}.

Cox rings of general varieties and the MDS  property were first defined by Hu and Keel \cite{HuKeel}.  However, the same problem for the blowups of weighted projective planes has a long history in commutative algebra where these Cox rings are studied because their finite generation is equivalent to that of the symbolic Rees algebra of the corresponding monomial ideal (see for example the work by Cowsik \cite{Cowsik}, Huneke \cite{Huneke}, Srinivasan \cite{Srinivasan}, Cutkosky \cite{Cutkosky}).  Goto, Nishida and Watanabe \cite{GNW} constructed the first examples of non-MDS among such $X$. More recently, Castravet and Tevelev \cite{CastravetTevelev} used one example by Goto, Nishida and Watanabe to show that the moduli spaces $\overline{M}_{0,n}$ are not MDS for $n$ large. These results were later strengthened and generalized in \cite{GK,  He,  HausenKeicherLaface}.

Finite generation of the Cox ring of $X$ is closely related to the existence of negative curves in $X$. Here we use the term ``negative curve'' to mean ``an irreducible curve of negative self-intersection, different from the exceptional curve of the blowup''. By a result of Cutkosky \cite{Cutkosky}, a variety $X$ as above is a MDS if and only if it contains a negative curve $C$ (and more generally, an irreducible curve $C$ with nonpositive self-intersection, \cite{GGK}) and a curve $D$ disjoint from $C$. Such a curve $C$ then generates a boundary ray of the effective cone of $X$ and $D$ generates a boundary ray of the nef cone of $X$.

All examples of non-MDS $X$ mentioned above contain a negative curve that vanishes to order $m=1$ at the point $t_0$. Kurano and Nishida  \cite{KuranoNishida} gave the first examples of non-MDS where $m=2$. In \cite{GGK} we  used the characteristic $p$ methods of Kurano and Nishida to generalize these examples to arbitrary $m>0$, giving an infinite family of MDS and non-MDS $X$. 
In this article we find a second family of examples and give a uniform proof of the MDS and non-MDS properties that works in both cases and expands the results in \cite{GGK}. The proof uses the geometry of the variety $X$ and does not rely on explicit computations as in \cite{KuranoNishida, GGK}. The proof is likely to apply for other families. 

Our examples will all contain negative curves. We prove that $X$ is not a MDS by showing that X does not contain any curves $D$ disjoint from $C$.

To construct examples of $X$ that contain a negative curve, we start with an irreducible curve $C^0$ in the 2-torus $T\isom \G_m^2$, vanishing to order $m$ at $t_0=(1,1)$. We compactify $T$ to a toric variety $X_\Delta$ by choosing a triangle $\Delta$ in $\RR^2$ that contains the Newton polytope of $C^0$. If the triangle has area less than $\frac{m^2}{2}$, then the strict transform of $C^0$ is a negative curve in 
\[ X = \Bl_{t_0} X_\Delta.\]

\begin{remark}
Toric varieties defined by triangles $\Delta$ include all weighted projective planes, but are in general isomorphic to quotients of weighted projective planes by a finite subgroup of the torus.
\end{remark}

We consider two families of curves $C^0$ and for each such curve we give infinitely many varieties $X$ that are MDS and infinitely many that are not MDS. 
The two families of curves are indexed by integers $m>0$ and can be described by their Newton polytopes:
\begin{enumerate}
	\item Let $\Delta_1^0(m)$ be the triangle with vertices $(-1,-1), (m-1,0), (0,m-1)$. (In the case $m=1$ the triangle degenerates to an interval.)
	\item Let $\Delta_2^0(m)$ be the triangle with vertices: $(-1,-1), (m-1,0),\left(\frac{2m-3}{4},\frac{2m-1}{2}\right)$.
\end{enumerate}    
We show in Proposition~\ref{prop:family} that each of these triangles contains the Newton polytope of an irreducible curve $C^0$ that vanishes to order $m$ at $t_0$. 

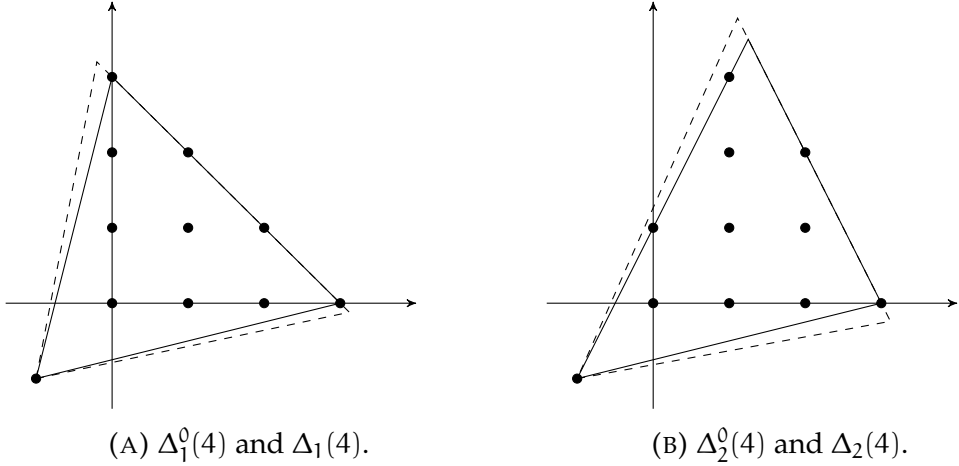
\begin{figure}[htb]
\centering 
  \begin{subfigure}[b]{0.4\linewidth}
    \begin{tikzpicture}[
    scale=1,
    axis/.style={ ->, >=stealth'},
    important line/.style={very thick},
    ]

    % x-axis
    \draw[axis] (-1.4,0) -- (4,0) node(xline)[right] {};    
    %y-axis
    \draw[axis] (0,-1.4) -- (0,4) node(yline)[above] {};
    %Nodes and tags
    \foreach \Point in {
      (-1,-1),
      (0,0),(1,0),(2,0),(3,0),
      (0,1),(1,1),(2,1),
      (0,2),(1,2),
      (0,3)
    }
    \draw[fill=black] \Point circle (0.06);

    \coordinate (A) at (-1,-1);
    \coordinate (B) at (3,0);
    \coordinate (C) at (0,3);
    \coordinate (BB) at (3.12,-0.12);
    \coordinate (CC) at (-0.2,3.2);

    %\draw (A) circle (0) node[below left] {$-\alpha$};
    %\draw (B) circle (0) node[above right] {$(m,m+1)$};
    %\draw (3,0) circle (0) node[below right] {$m-1+\beta$};

    %triangle
    \draw (A) -- (B) -- (C) -- cycle;
    \draw[dashed] (A) -- (BB) -- (CC) -- cycle;

\end{tikzpicture}
    \caption{$\Delta_1^0(4)$ and $\Delta_1(4)$.}  
  \end{subfigure}
\qquad
  \begin{subfigure}[b]{0.4\linewidth}
    \begin{tikzpicture}[
    scale=1,
    axis/.style={ ->, >=stealth'},
    important line/.style={very thick},
    ]

    % x-axis
    \draw[axis] (-1.4,0) -- (4,0) node(xline)[right] {};    
    %y-axis
    \draw[axis] (0,-1.4) -- (0,4) node(yline)[above] {};
    %Nodes and tags
    \foreach \Point in {
      (-1,-1),
      (0,0),(1,0),(2,0),(3,0),
      (0,1),(1,1),(2,1),
      (1,2),(2,2),
      (1,3)
    }
    \draw[fill=black] \Point circle (0.06);

    \coordinate (A) at (-1,-1);
    \coordinate (B) at (3,0);
    \coordinate (C) at (5/4,7/2);
    \coordinate (BB) at (3.12,-0.24);
    \coordinate (CC) at (5/4-0.14,7/2+0.28);

    %\draw (A) circle (0) node[below left] {$-\alpha$};
    %\draw (B) circle (0) node[above right] {$(m,m+1)$};
    %\draw (3,0) circle (0) node[below right] {$m-1+\beta$};

    %triangle
    \draw (A) -- (B) -- (C) -- cycle;
    \draw[dashed] (A) -- (BB) -- (CC) -- cycle;
  
\end{tikzpicture}
    \caption{$\Delta_2^0(4)$ and $\Delta_2(4)$.}  
  \end{subfigure}
\caption{Triangles $\Delta_i^0(4)$ with solid edges and $\Delta_i(4)$ with dashed edges for $i=1,2$.}
\label{fig-both-triangles}
\end{figure}

The toric varieties $X_\Delta$ are constructed by choosing a slightly larger triangle containing $\Delta_1^0$ or $\Delta_2^0$ with vertices:
\begin{enumerate}
	\item $(-1,-1), (m-1,0)+\alpha(1,-1), (0,m-1)+\beta(-1,1)$;
	\item $(-1,-1), (m-1,0)+\alpha(1,-2), \left(\frac{2m-3}{4},\frac{2m-1}{2}\right)+\beta(-1,2)$;
\end{enumerate}
for $\alpha,\beta\geq 0$, see Figure \ref{fig-both-triangles}. We denote these latter triangles by $\Delta_i(m)$ with $i=1,2$. 
%The original triangles $\Delta_i^0(m)$ are obtained from the $\Delta_i(m)$ by setting $\alpha=\beta=0$, which justifies the superscript ``$0$''. 
The triangles $\Delta_i(m)$ depend on $\alpha$ and $\beta$, which we omit from notation. We will often make a statement for all $m>0$ and also drop $m$ from the notation.

The curves $C$ are negative  in $X$ if $\alpha$ and $\beta$ satisfy:
\begin{enumerate}
	\item $\alpha+\beta< \frac{1}{m+1}$;
	\item $\alpha+\beta< \frac{1}{4(2m+1)}$.
\end{enumerate}

Our main result describes which of the varieties $X$ are MDS.

\begin{theorem} \label{main-thm}
Let $X_\Delta$ with $\Delta=\Delta_i$ for $i=1,2$ be the toric variety defined by one of the two types of triangles as above. Assume that $\alpha$ and $\beta$ are such that $C$ is a negative curve in $X=\Bl_{t_0} X_\Delta$. Then, $X$ is not a MDS if and only if in families (1) and (2), respectively,
\begin{enumerate}
	\item  $\alpha>0$ and $\beta > \frac{1}{m+2}$, or $\beta>0$ and $\alpha > \frac{1}{m+2}$;
	\item  $\alpha>0$ and $\beta >0$.
        \end{enumerate}       
\end{theorem}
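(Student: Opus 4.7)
The plan is to apply Cutkosky's criterion as in \cite{Cutkosky, GGK}: since $C$ is a negative curve in $X$, the surface $X$ is a MDS if and only if there exists an irreducible curve $D$ on $X$, distinct from $\tilde C$, with $D \cdot \tilde C = 0$ (i.e.\ $D$ disjoint from $\tilde C$). So the entire theorem reduces to deciding, for each triple $(\alpha,\beta,m)$, whether such a $D$ exists.

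For the MDS cases, I would construct $D$ explicitly from the toric geometry of $X_\Delta$. When one of $\alpha,\beta$ is zero, the enlarged triangle $\Delta_i$ shares an edge with the Newton polytope $\Delta_i^0$, and the corresponding toric boundary divisor of $X_\Delta$ meets $C^0$ only along that fixed edge; its strict transform in $X$, possibly scaled, provides the desired $D$ disjoint from $\tilde C$. In family (1) in the subtler range $\alpha>0$, $\beta\le \frac{1}{m+2}$, the curve $D$ cannot be toric: I would look for an irreducible curve $D^0$ on $X_\Delta$ of multiplicity $m+1$ at $t_0$, with Newton polytope contained in a suitable dilation of $\Delta_1$, tangent to the right directions so that all intersection of $D^0$ with $C^0$ concentrates at $t_0$. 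The threshold $\frac{1}{m+2}$ should arise as precisely the value at which the Newton polytope of such a $D^0$ first fits inside the lattice polygon dictated by the numerical constraints $\tilde C \cdot \tilde D = 0$ and $\tilde D$ effective.

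For the non-MDS cases, I would use a geometric version of the Kurano--Nishida characteristic $p$ method as refined in \cite{GGK}. Reducing to positive characteristic, Frobenius pullbacks of well-chosen sections on $X_\Delta$ yield effective divisors in classes of the form $n\pi^*H - kE$; tracking their decomposition into $\tilde C$ plus an effective error supported on the toric boundary, and using the negativity $\tilde C^2<0$, one would show that any effective divisor $D$ with $D\cdot \tilde C \le 0$ must contain $\tilde C$ as a component. Combined with the Newton-polygon bound that, in the non-MDS range, forbids any non-boundary candidate for $D$ of the correct class, this rules out a curve $D\neq \tilde C$ disjoint from $\tilde C$ and forces $X$ not to be a MDS. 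A key feature is to phrase the Frobenius argument in terms of intersection data on the toric surface $X_\Delta$, so that the same proof handles both $\Delta_1$ and $\Delta_2$.

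The main obstacle is the sharpness of the threshold $\frac{1}{m+2}$ in family (1), combined with the demand for uniformity across the two families. On the MDS side one must exhibit an actual irreducible $D^0$ realizing the bound, and on the non-MDS side the characteristic $p$ argument must produce effective divisors with tight enough numerical control to exclude $D$ for $\beta$ only marginally above the threshold. Carrying this out purely in the language of the triangle $\Delta_i$ and the intersection lattice on $X_\Delta$, rather than via explicit monomial computations, is the technical heart of the proof and is precisely what the paper advertises as the advance over the earlier arguments in \cite{KuranoNishida, GGK}.
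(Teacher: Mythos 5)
Your framing via Cutkosky's criterion is the right starting point, and your instinct that the threshold $\frac{1}{m+2}$ in family (1) should be detected by whether a suitable polynomial fits inside the relevant Newton polygon is close to what happens (the paper uses $y\xi_{m+1}$, supported in $\Delta'$ exactly when $\alpha,\beta\leq\frac{1}{m+2}$). But there are two genuine gaps. First, your MDS construction for $\alpha=0$ or $\beta=0$ cannot work as stated: a toric boundary divisor of $X_\Delta$ misses the general point $t_0$, so its strict transform has class $\pi^*A$ with zero $E$-coefficient, and then $\pi^*A\cdot(\pi^*H-mE)=A\cdot H>0$ on a Picard-rank-one toric surface; no scaling fixes this. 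The curve $D$ must lie in a class $\pi^*H'-(im+1)E$, i.e.\ it must pass through $t_0$ with multiplicity $im+1$. The paper instead takes the (non-toric) divisor of $(1-y)^{im+1}$, whose Newton segment fits in $\Delta'$ precisely when $\alpha=0$.

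Second, the non-MDS half is where the real content lies and your sketch does not contain the two ideas that make it work. The characteristic-$p$ input is used in the opposite direction from what you describe: in characteristic $p$ a negative curve always forces the existence of a disjoint $D$ (Artin/Cutkosky), so one cannot hope to "rule out $D$" there. Instead the paper shows $p^{n_p}\in HC_{\overline{\FF}_p}$ by an explicit Frobenius-type construction $\zeta=((1-y)^{im+1})^{p^l}+(xy\xi_m)^{p^l}(1+g)$, and combines this with the semigroup structure of $HC_{\overline{\FF}_p}$ and the $H^1$-vanishing statements of Corollary~\ref{HCCorollary} to reduce the entire non-MDS claim to the single finite statement $m\notin HC_k$. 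That statement is then disproved by contradiction via an auxiliary triangle $\tilde\Delta$, obtained by shrinking $m\Delta'$ until $1-y$ becomes a negative curve on $\Bl_{t_0}X_{\tilde\Delta}$ while still supporting the hypothetical section $\zeta$; negativity forces $(1-y)\mid\zeta$, which kills the left vertex of the support of $\zeta$ and forces $D$ to meet $C$ at the corresponding $T$-fixed point. Your proposed route --- tracking decompositions of Frobenius pullbacks to show every effective divisor near the boundary contains $\tilde C$ --- amounts to computing the effective cone directly, which is substantially harder and is not how the sharpness of the thresholds is obtained.
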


The first family of curves $C^0$ is the one considered in \cite{GGK}. However, the theorem here is stronger, enlarging the set of  non-MDS and giving an if and only if statement.

In Theorem~\ref{main-thm} we did not consider the case where $C$ has self-intersection number $0$ (which is the case if $\alpha+\beta=\frac{1}{m+1}$ in the first family, and $\alpha+\beta=\frac{1}{4(2m+1)}$ in the second family). Our proof does not give much information about such $X$.

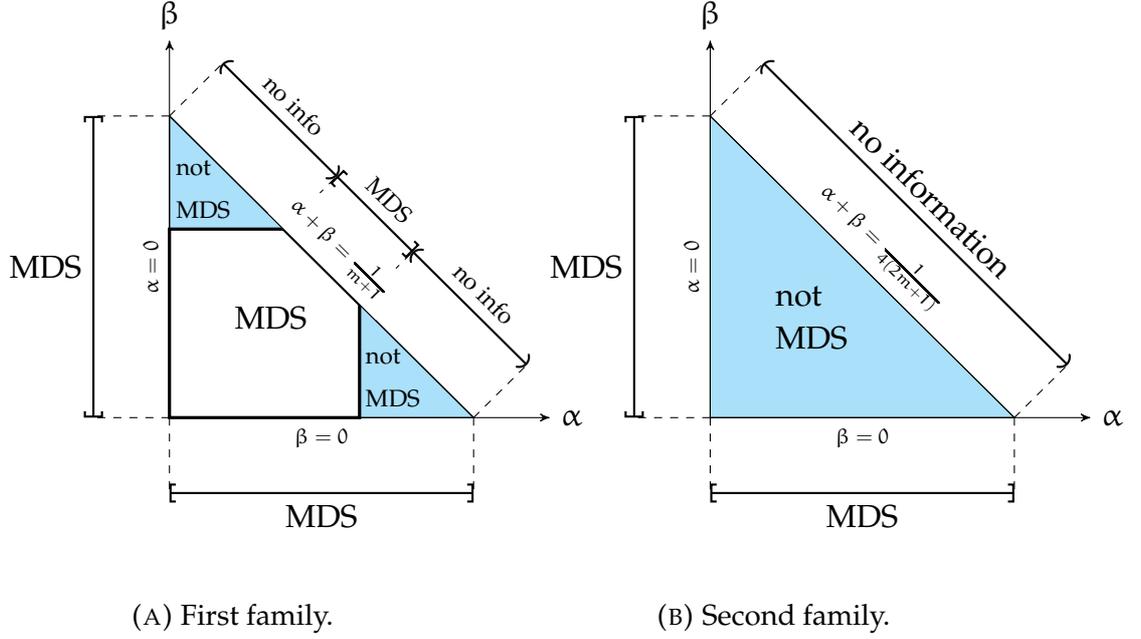
\begin{figure}[htb]
\centering 
  \begin{subfigure}[b]{0.4\linewidth}
    \begin{tikzpicture}[
    scale=1,
    axis/.style={ ->, >=stealth'},
    important line/.style={},
    ]

    % x-axis
    \draw[axis] (0,0) -- (5,0) node(xline)[right] {};    
    %y-axis
    \draw[axis] (0,0) -- (0,5) node(yline)[above] {};

    \coordinate (A) at (4,0);
    \coordinate (B) at (0,4);
    \coordinate (C) at (0,0);

    \draw (A) -- (B);

    \coordinate (A1) at (2.5,0);
    \coordinate (A2) at (2.5,1.5);
    \coordinate (B1) at (0,2.5);
    \coordinate (B2) at (1.5,2.5);

    \draw[fill = cyan!30] (A1) -- (A2) -- (A) -- cycle;
    \draw[fill = cyan!30] (B1) -- (B2) -- (B) -- cycle;

    \draw (5,0) circle (0) node[right] {$\alpha$};
    \draw (0,5) circle (0) node[above] {$\beta$};
    \node at (barycentric cs:A=1,B=1,C=1) {MDS};
    \node at (3.44,0) [above left,align=left]{\scriptsize not \\ \scriptsize MDS};
    \node at (-0.06,2.5) [above right,align=left]{\scriptsize not \\ \scriptsize MDS};

    \draw[very thick] (0,0) -- (A1) -- (A2) -- (B2) -- (B1) -- cycle;
    
    \begin{scope}[shift={(0,2.5)}]
      \draw[dashed] (0,1.5) -- (-1,1.5);
      %\draw[dashed] (0,0) -- (-1,0);
    \end{scope}

    \begin{scope}[shift={(2.5,0)}]
      \draw[dashed] (1.5,0) -- (1.5,-1);
      %\draw[dashed] (0,0) -- (0,-1);
    \end{scope}

    \begin{scope}[shift={(A)}]
      \draw[dashed] (0,0) -- (0.7,0.7);
    \end{scope}
    %\begin{scope}[shift={(A2)}]
    %  \draw[dashed] (0,0) -- (0.7,0.7);
    %\end{scope}

    \begin{scope}[shift={(B)}]
      \draw[dashed] (0,0) -- (0.7,0.7);
    \end{scope}
    \begin{scope}[shift={(B2)}]
      \draw[dashed] (0,0) -- (0.7,0.7);
    \end{scope}
    \begin{scope}[shift={(A2)}]
      \draw[dashed] (0,0) -- (0.7,0.7);
    \end{scope}

    \draw[dashed] (0,0) -- (-1,0);
    \draw[dashed] (0,0) -- (0,-1);

    \draw[{(-)},thick] (1.5+0.7,2.5+0.7) -- (0.7,4.7) node[midway, above,sloped] {\scriptsize no info};
    \draw[{[-]},thick] (1.5+0.7,2.5+0.7) -- (2.5+0.7,1.5+0.7) node[midway, above,sloped] {\scriptsize MDS};	
    \draw[{(-)},thick] (2.5+0.7,1.5+0.7) -- (4.7,0.7) node[midway, above,sloped] {\scriptsize no info};
    %\draw[{(-)},thick] (0.7,4.7) -- (1.5+0.7,2.5+4.7) node[midway, above right] {?};

%    \draw (-1,0) -- (nodeB) node [midway, above, sloped] (TextNode) {path text};
    \draw[{[-]},thick] (-1,0) -- (-1,4) node[midway,left] {MDS};
    \draw[{[-]},thick] (0,-1) -- (4,-1) node[midway,below] {MDS};

    \draw (A) -- (B) node[rectangle, fill, fill=white, midway, above, sloped] {\tiny $\alpha+\beta=\frac{1}{m+1}$};
    \draw (C) -- (B) node[midway,above,sloped] {\tiny $\alpha=0$};
    \draw (C) -- (A) node[midway,below,sloped] {\tiny $\beta=0$};

\end{tikzpicture}
    \caption{First family.}  
  \end{subfigure}
\qquad
  \begin{subfigure}[b]{0.4\linewidth}
    \begin{tikzpicture}[
    scale=1,
    axis/.style={ ->, >=stealth'},
    important line/.style={},
    ]

    % x-axis
    \draw[axis] (0,0) -- (5,0) node(xline)[right] {};    
    %y-axis
    \draw[axis] (0,0) -- (0,5) node(yline)[above] {};

    \coordinate (A) at (4,0);
    \coordinate (B) at (0,4);
    \coordinate (C) at (0,0);

    \draw[fill=cyan!30] (A) -- (B) -- (C) -- cycle;

    \draw (5,0) circle (0) node[right] {$\alpha$};
    \draw (0,5) circle (0) node[above] {$\beta$};

    \node at (barycentric cs:A=1,B=1,C=1) [align=left]{not \\ MDS};
    
    \begin{scope}[shift={(0,2.5)}]
      \draw[dashed] (0,1.5) -- (-1,1.5);
      %\draw[dashed] (0,0) -- (-1,0);
    \end{scope}

    \begin{scope}[shift={(2.5,0)}]
      \draw[dashed] (1.5,0) -- (1.5,-1);
      %\draw[dashed] (0,0) -- (0,-1);
    \end{scope}

    \begin{scope}[shift={(A)}]
      \draw[dashed] (0,0) -- (0.7,0.7);
    \end{scope}
    %\begin{scope}[shift={(A2)}]
    %  \draw[dashed] (0,0) -- (0.7,0.7);
    %\end{scope}

    \begin{scope}[shift={(B)}]
      \draw[dashed] (0,0) -- (0.7,0.7);
    \end{scope}
    %\begin{scope}[shift={(B2)}]
    %  \draw[dashed] (0,0) -- (0.7,0.7);
    %\end{scope}

    \draw[dashed] (0,0) -- (-1,0);
    \draw[dashed] (0,0) -- (0,-1);

    \draw[{(-)},thick] (4.7,0.7) -- (0.7,4.7) node[midway, above,sloped] {no information};
    \draw[{[-]},thick] (-1,4) -- (-1,0) node[midway,left]{MDS};
    %Sample breaking line for vertical line.
    %\draw[{[-]},thick] (-1,4) -- (-1,0) node[midway,text width=2.1cm]{not\\ MDS};
    %Sample with all options
    %\draw (-1,0) -- (nodeB) node [midway, above, sloped] (TextNode) {path text};
    %Sample breaking line for horizontal line.
    %\draw[{[-]},thick] (0,-1) -- (4,-1) node[midway,align=center]{\\[1cm]not\\ MDS};
    \draw[{[-]},thick] (0,-1) -- (4,-1) node[midway,below]{MDS};

    \draw (A) -- (B) node[midway, above, sloped] {\tiny $\alpha+\beta=\frac{1}{4(2m+1)}$};
    \draw (C) -- (B) node[midway,above,sloped] {\tiny $\alpha=0$};
    \draw (C) -- (A) node[midway,below,sloped] {\tiny $\beta=0$};

\end{tikzpicture}
    \caption{Second family.}  
  \end{subfigure}
\caption{Visual representation of the results in Theorem \ref{main-thm} in the $(\alpha,\beta)$-plane.}
\label{fig-both-parameters}
\end{figure} 

\begin{example}\label{example}
  We have the following examples for each of the two of families of triangles in Theorem \ref{main-thm}:
  \begin{enumerate}[(1)]
  \item The parameters
    \[
      \alpha=\frac{3}{4m^2+11m+10},\qquad\beta=\frac{4m+5}{(2m+3)^2}
    \]
for the first family define a sequence of spaces for which Theorem 1.3 in \cite{GGK} does not apply. However, they satisfy the inequalities of Theorem \ref{main-thm} (1).

The normal fan of the triangle $\Delta_1$ has rays generated by
    \[
      (-(4m^2+8m+5),4(m+1)),\quad (4m+7,-(m+1)(4m+3)),\quad (1,1).
    \]
    Then, for $m\not\equiv {1\mod 5}$ the associated variety is:
    \[
      X_\Delta = \PP(4m^2+11m+10,(2m+3)^2,16 m^4 + 60 m^3 + 72 m^2 + 15 m - 13),
    \]
    and its blowup is \emph{not} a MDS.

    For $m=2,3,4,5$ we have the following examples:
    \[
      \PP(48, 49, 1041),\quad\PP(79, 81, 3596),\quad\PP(118, 121, 9135),\quad\PP(165, 169, 19362).
    \]
If $m\equiv {1\mod 5}$ then $X_\Delta$ is a quotient of a weighted projective plane by a finite group. For example, for $m=1$, $X_\Delta$ is isomorphic to the quotient of $\PP(1,1,6)$ by a $3$-element group. Its blowup is still a non-MDS.

  \item For the second family the parameters
    \[
      \alpha=\frac{1}{10m+7},\qquad\beta=\frac{2m+1}{16(20m^2+4m+13)}
    \]
    satisfy the inequalities of Theorem \ref{main-thm} (2).

    The normal fan of the triangle $\Delta_2$ has rays generated by
    \[
      (-2(80m^2+16m+53),80m^2+16m+51),\quad (5,-(5m+1)),\quad (2,1).
    \]

    %Argument: In the original triangle the normal rays are (0,-1), (-(10m+7),(5m+1)),(16(20m^2+4m+13),-(80m^2+16m+51)). The vertical lattice is generated by the first vector. To generate the horizontal lattice the first components must be relatively prime. Using that 16(20m^2+4m+13)=(10m+7)(32m-16)+320 we get that both integers are relatively prime for every m. This yields WPS for every m.
    Thus, for every $m$ the associated variety is:
    \[
      X_\Delta = \PP(10m+7,16(20m^2+4m+13),800m^3-80m^2+482m-149).
    \]
    and its blowup is \emph{not} a MDS.

    For $m=1,2,3$ we have the following examples:
    \[
      \PP(17,592,1053),\quad\PP(27,1616,6895),\quad\PP(37,3280,22177).
    \]
  \end{enumerate}
\end{example}

In Theorem~\ref{main-thm} we consider only a subset of toric varieties $X_\Delta$ such that the blowup contains a negative curve from one of the two families. It is not difficult to construct examples of MDS by also varying the right side of the triangles (with slope $-1$ or $-2$) as was done in \cite{GGK}. Indeed, we will prove below that the cases where $\alpha=0$  or $\beta=0$ give rise to MDS. The same proof works for more general triangles.

The two families of curves studied here arise from triangles $\Delta^0(m)$ that contain exactly ${m+1 \choose 2}+ 1 $ lattice points. (This can be seen by counting lattice points in rows.) Since vanishing to order $m$ imposes ${m+1 \choose 2}$ linear conditions, we are guaranteed to find at least one negative curve in $X$. Determining if a general $X$ contains a negative curve is an open problem. Kurano and Matsuoka in \cite{KuranoMatsuoka} give examples of negative curves in triangles with fewer than ${m+1 \choose 2}+ 1 $ lattice points, and examples of $\Bl_{t_0}\PP(a,b,c)$ which are conjectured  to contain no negative curves.

%%%%%%%%%%%%%%%%%%%%%%%%%%%%%%%%%%%%%%%%%%%%%%%%%%%%%%%%%%%%%%%%%%%%%%%%%%%%%%%%%%%%
%%%%%%%%%%%%%%%%%%%%%%%%%%%%%%%%%%%%%%%%%%%%%%%%%%%%%%%%%%%%%%%%%%%%%%%%%%%%%%%%%%%%
%%%%                                                                            %%%%
%%%%                           Section 2. Negative curves                       %%%%
%%%%                                                                            %%%%
%%%%%%%%%%%%%%%%%%%%%%%%%%%%%%%%%%%%%%%%%%%%%%%%%%%%%%%%%%%%%%%%%%%%%%%%%%%%%%%%%%%%
%%%%%%%%%%%%%%%%%%%%%%%%%%%%%%%%%%%%%%%%%%%%%%%%%%%%%%%%%%%%%%%%%%%%%%%%%%%%%%%%%%%%

\section{Negative curves}     \label{section.negative.curves}

In this section we construct the two families of irreducible curves $C^0 \subset T$. These are defined by Laurent polynomials that we will call $\xi_m$ throughout the paper. 

We let $K$ denote any field. The varieties $X_\Delta$ and $X=\Bl_{t_0} X_\Delta$ can be defined over $K$. 

We say that a Laurent polynomial is supported in a triangle $\Delta$ if its Newton polytope lies in $\Delta$.

Let us start with an irreducibility criterion for Laurent polynomials.

\begin{lemma} \label{irreducibility.lemma}
   Let $f(x,y) \in K[x^{\pm 1},y^{\pm 1}]$ be a Laurent polynomial supported in a triangle $\Delta$. 
   Assume that $\Delta$ has an edge $E$ whose only integral points are its two endpoints and that the corresponding coefficients of $f(x,y)$ are nonzero.
   Then, $f(x,y)$ is irreducible in $K[x^{\pm 1},y^{\pm 1}]$. If additionally $f(x,y) \in K[x,y]$ is a polynomial not divisible by $x$ or $y$, then $f(x,y)$ is irreducible in $K[x,y]$.
\end{lemma}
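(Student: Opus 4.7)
The plan is to argue by contradiction using Newton polytopes. Suppose $f = gh$ is a nontrivial factorization in $K[x^{\pm 1},y^{\pm 1}]$, meaning neither $g$ nor $h$ is a Laurent unit, so both $N(g)$ and $N(h)$ are positive-dimensional lattice polytopes. Newton polytopes are Minkowski-additive under multiplication, so $N(f) = N(g) + N(h) \subseteq \Delta$.

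First I would identify the face of $N(f)$ in the direction of the outward primitive normal $v$ of $E$. Because the endpoints $P_1, P_2$ of $E$ appear with nonzero coefficients in $f$, they lie in $N(f)$ and both maximize $\langle v,\cdot\rangle$ over $\Delta$. Hence the $v$-maximizing face of $N(f)$ contains both endpoints and is contained in $E$, so it equals $E$. By Minkowski additivity of faces, $E = F_g + F_h$, where $F_g, F_h$ are the $v$-faces of $N(g), N(h)$. Since $E$ is primitive (lattice length $1$) and lattice segments in a fixed primitive direction have integral length, one of $F_g, F_h$ must be a single point; up to relabeling $F_g = \{v_g\}$ and $F_h = E - v_g$.

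The core of the argument then uses the triangular shape of $\Delta = \operatorname{conv}(P_1, P_2, P_3)$ to force $g$ to be a monomial. For any $p \in N(g)$, set $u := p - v_g$. Then $p + (P_i - v_g) = P_i + u$ lies in $N(g) + N(h) = N(f) \subseteq \Delta$ for $i = 1, 2$, so the translated segment $E + u$ sits inside $\Delta$. An elementary planar argument shows this forces $u = 0$: choose coordinates so that $E$ lies on the $x$-axis with length $1$ and $P_3$ strictly above it at height $h$; the horizontal width of $\Delta$ at height $y > 0$ is $1 - y/h < 1$, so no horizontal translate of $E$ fits at any positive height, and symmetrically no translate at negative height either. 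Hence every $p \in N(g)$ equals $v_g$, so $g$ is a monomial, contradicting nontriviality. This establishes irreducibility in $K[x^{\pm 1}, y^{\pm 1}]$.

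For the second assertion, suppose $f \in K[x,y]$ is not divisible by $x$ or $y$ and $f = gh$ is a polynomial factorization. It is also a Laurent factorization, so by the first part one factor, say $g$, is a Laurent monomial $c x^a y^b$. Polynomiality forces $a, b \geq 0$, and a positive exponent would make $x$ or $y$ divide $f$; thus $g$ is a constant. The main obstacle I anticipate is the geometric step in the third paragraph: the triangle hypothesis on $\Delta$ is indispensable, since for a parallelogram one pair of opposite sides admits nontrivial parallel translates into the polygon, and it is precisely the presence of a single opposite vertex $P_3$ that collapses $N(g)$ to a point.
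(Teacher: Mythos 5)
Your proof is correct and follows essentially the same route as the paper's: Minkowski additivity of Newton polytopes, indecomposability of the primitive edge $E$ as a sum of lattice polytopes, and the observation that no nontrivial translate of a full edge of a triangle fits inside the triangle. The only cosmetic difference is that the paper rules out the parallelogram $F+H$ built from an edge $F$ of $P_g$ through the vertex $G$, whereas you translate $E$ by $p-v_g$ for an arbitrary point $p\in N(g)$; both steps reduce to the same width estimate on $\Delta$.
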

  
\begin{proof} 
Suppose that $f=gh$. Then the Newton polytope of $f$ is the Minkowski sum of the Newton polytopes of $g$ and $h$, $P_f= P_g+P_h$. We show that if the sum of any two integral polytopes $P_g+P_h$ lies in the triangle $\Delta$ and contains its edge $E$, then one of the summands has to be a point.

A face of the Minkowski sum is the Minkowski sum of two faces, one from each summand. Hence can write $E = G+H$, where $G$ is a face of $P_g$ and $H$ is a face of $P_h$. Since $E$ is not the Minkowski sum of two nontrivial integral polytopes, one of the faces, say $G$, has to be a vertex of the corresponding polytope $P_g$. If $P_g$ is not a point, let $F$ be any edge of $P_g$ containing the vertex $G$. Then $F+H$ must lie in $\Delta$. However, $F+H$ is a parallelogram with one edge equal to the edge $E$ of $\Delta$. This shows that $P_g$ is the point $G$.

The last statement follows from the first because monomial terms are the only units in $K[x^{\pm 1}, y^{\pm 1}]$. 
\end{proof}

\begin{remark} 
The previous proof also works for Laurent polynomials over any field supported in a higher dimensional simplex where the simplex has one edge $E$ as before contained in the Newton polytope of $f$. 
\end{remark}

The following proposition constructs the equations $\xi_m$ defining the curves $C^0\subset T$.

\begin{proposition}\label{prop:family}
Consider the triangles $\Delta_i^0(m)$ for $i=1,2$ and $m>0$ from Section 1. There exists a polynomial $\xi_m\in\ZZ[x^{\pm 1}, y^{\pm 1}]$ supported in the triangle $\Delta_i^0(m)$ such that for any field $K$ the polynomial $\xi_m$ considered as an element in $K[x^{\pm 1}, y^{\pm 1}]$ satisfies:
\begin{enumerate}
    \item $\xi_m$ has multiplicity exactly $m$ at $t_0=(1,1)$.
  \item The coefficients of the monomials of $\xi_m$ corresponding to the points $(-1,-1)$ and $(m-1,0)$ are nonzero in $K$. In particular, $\xi_m$ is irreducible in $K[x^{\pm 1}, y^{\pm 1}]$.
  \end{enumerate}
\end{proposition}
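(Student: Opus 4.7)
My plan is to construct $\xi_m$ as the integer solution (normalized by $c_{-1,-1}=1$) to the linear system imposed on the coefficients by the vanishing condition at $(1,1)$, and then read off the three required properties from specific equations of that system. First I would verify by row-counting that $|\Delta_i^0(m)\cap\Z^2|=\binom{m+1}{2}+1$ for both families and record the extremality facts $(-1,-1)$ is the unique lattice point with $x$-coordinate $-1$, while $(m-1,0)$ is the unique lattice point with $x$-coordinate $\geq m-1$. Writing $\xi_m=\sum c_{ij}x^iy^j$ and expanding $x^iy^j=\sum_{a,b\geq 0}\binom{i}{a}\binom{j}{b}u^a v^b$ in local coordinates $u=x-1$, $v=y-1$ (using $\binom{-1}{k}=(-1)^k$), the condition of vanishing to order $\geq m$ at $(1,1)$ becomes the $\binom{m+1}{2}$ integer linear equations
\[
\sum_{(i,j)\in\Delta_i^0(m)\cap\Z^2}c_{ij}\binom{i}{a}\binom{j}{b}=0,\qquad a+b\leq m-1.
\]
Fixing $c_{-1,-1}=1$ produces a square $\binom{m+1}{2}\times\binom{m+1}{2}$ integer system $Mc=v$.

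The main technical step is showing $M$ is unimodular, so that the unique rational solution has integer entries. In family $1$ the remaining lattice points are exactly $\{(i,j):i,j\geq 0,\,i+j\leq m-1\}$, and $M_{(a,b),(i,j)}=\binom{i}{a}\binom{j}{b}$ vanishes unless $i\geq a$ and $j\geq b$, so $M$ is upper-triangular with $1$'s on the diagonal in the componentwise order and $\det M=1$. In family $2$ the lattice points split into ``standards'' ($i+j\leq m-1$) and ``extras'' ($i+j\geq m$), and the indexing pairs $(a,b)$ split into standards and ``missings'' ($b>2a+1$). I would check combinatorially that at every height $j$ the extras $(i,j)$ and the missings $(a,b)$ with $b=j$ are equinumerous, and that the corresponding missing-extra sub-block of $M$ reduces (since $\binom{j}{j}=1$) to a Pascal matrix $[\binom{i}{a}]$ on consecutive indices whose determinant is $1$ by the Vandermonde identity $\binom{C+s}{r}=\sum_t\binom{C}{t}\binom{s}{r-t}$. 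Row-reducing the off-diagonal ``missing-standard'' block using the upper-triangular standard block then gives $\det M=\pm 1$.

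With $\xi_m\in\Z[x^{\pm 1},y^{\pm 1}]$ in hand, the three properties follow from single rows of the system. The $(a,b)=(m-1,0)$ equation involves only the two lattice points with $x\geq m-1$, yielding $(-1)^{m-1}+c_{m-1,0}=0$, i.e.\ $c_{m-1,0}=(-1)^m\neq 0$ in any field $K$. The coefficient of $u^m$ in the Taylor expansion of $\xi_m$ at $(1,1)$ is $\sum c_{ij}\binom{i}{m}\binom{j}{0}$; since $\binom{i}{m}=0$ for integer $0\leq i\leq m-1$ (and the maximum $i$-value in either triangle is $m-1$) while $\binom{-1}{m}=(-1)^m$, this coefficient equals $(-1)^m\neq 0$, so the initial form of $\xi_m$ at $(1,1)$ is nonzero in any characteristic and the multiplicity is exactly $m$. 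The final irreducibility assertion is then immediate from Lemma~\ref{irreducibility.lemma} applied to the edge from $(-1,-1)$ to $(m-1,0)$, which has no interior lattice points since $\gcd(m,1)=1$.

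The main obstacle is establishing the unimodularity of $M$ in family $2$, where the natural orderings do not produce a triangular structure; the argument requires both the geometric pairing of extras with missings height-by-height and the Pascal-matrix determinant identity, neither of which appears in family $1$.
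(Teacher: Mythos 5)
Your overall strategy is genuinely different from the paper's on the points that matter most. The paper gets existence from the same dimension count, but then obtains uniqueness up to scalar, integrality, and the nonvanishing of the $(-1,-1)$ coefficient over every field by \emph{geometric} arguments: a solution supported away from $(-1,-1)$ would live on a subtriangle whose associated blown-up surface (e.g.\ $\Bl_{t_0}\PP^2$ in family 1) carries the curve $1-y$ of self-intersection zero and hence admits no negative curve, and uniqueness follows from the negativity of the class $\pi^*H-mE$. You instead propose to prove that the $\binom{m+1}{2}\times\binom{m+1}{2}$ coefficient matrix $M$ is unimodular; this would indeed deliver everything at once (existence, uniqueness, integrality, and invertibility of $M$ mod every $p$, hence all assertions over every $K$), and is arguably cleaner where it works. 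Your remaining verifications --- $c_{m-1,0}=(-1)^m$ from the row $(a,b)=(m-1,0)$, exactness of the multiplicity from the $u^m$-coefficient, and irreducibility from Lemma~\ref{irreducibility.lemma} applied to the primitive bottom edge --- coincide with the paper's. For family 1 your triangularity argument in the componentwise order is complete and correct.

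The gap is in family 2, and it is a real one, because in your setup unimodularity is not optional: if $p\mid\det M$ you lose invertibility mod $p$, and with it uniqueness and the nonvanishing of $c_{-1,-1}$ in characteristic $p$. The block structure is not as clean as you describe: the ``missing--standard'' block $C$ is \emph{not} zero. For instance, for $m\geq 4$ the missing condition $(0,2)$ pairs nontrivially with the standard point $(1,2)$ (entry $\binom{1}{0}\binom{2}{2}=1$), so $\det M=\det A\cdot\det\bigl(D-CA^{-1}B\bigr)$ rather than $\det A\cdot\det D$, and the Schur complement must actually be computed; for $m=4$ one finds $D-CA^{-1}B=\left(\begin{smallmatrix}-1&0\\0&1\end{smallmatrix}\right)$, not the unitriangular $D$. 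Your sentence ``row-reducing the off-diagonal missing--standard block \dots then gives $\det M=\pm1$'' asserts exactly what needs to be proved. The claim checks out for small $m$, but as written the family-2 case is a plan, not a proof. If you want to keep the linear-algebra route you must control $CA^{-1}B$ explicitly; otherwise the paper's geometric detour avoids this combinatorics entirely and works uniformly over every field.
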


\begin{proof}
We will show that for each field $K$ there exists a polynomial $\xi_{m,K}$ supported in $\Delta_i^0(m)$ and satisfying the two conditions of the proposition. Such a polynomial is unique up to a constant multiple because it defines a negative curve in $X_K =  \Bl_{t_0} X_{\Delta_i^0(m)}$, where the varieties are defined over $K$. To get an integer polynomial, we construct $\xi_{m,\QQ}$ and clear its denominators so that the gcd of its coefficients is $1$. Let us call this polynomial $\xi_m$. Notice that this $\xi_m$ is nonzero as a polynomial in $K[x^{\pm 1}, y^{\pm 1}]$, and moreover, it defines a (possibly reducible) curve of negative self-intersection in $X_K$. This implies that $\xi_m$ reduces to a constant multiple of $\xi_{m,K}$ in $K[x^{\pm 1}, y^{\pm 1}]$, thus proving the proposition.

Let us now fix a field $K$ and construct $\xi_{m,K}$. The condition of vanishing to order at least $m$ imposes $\binom{m+1}{2}$ linear conditions on the coefficients of the $\xi_m$. Indeed, the condition translates to $\xi(x+1,y+1)$ having no terms of degree less than $m$. (In the case of a Laurent polynomial we need to expand $(x+1)^{-1} = 1-x+x^2- \ldots$, similarly for $(y+1)^{-1}$, and work with power series in $x$ and $y$.) The triangles $\Delta_i^0(m)$ contain exactly $\binom{m+1}{2}+1$ lattice points hence there is at least one nonzero polynomial supported in $\Delta_i^0(m)$ that vanishes to order at least $m$ at $t_0$. Let us check the two conditions of the proposition for this polynomial.

Assume the coefficient $(-1,-1)$ of $\xi_m$ is zero. For the first family this implies  $\xi_m$ is supported in the right triangle with vertices $(0,0)$, $(m-1,0)$ and $(0,m-1)$. The toric variety associated to this triangle is $\PP^2$. However, the blowup of $\PP^2$ does not contain negative curves (it contains a curve of self-intersection number zero defined by the polynomial $1-y$). Thus, the coefficient cannot be zero.
Similarly, for the second family we get an isosceles triangle with equal height and width. The polynomial $1-y$ defines an irreducible curve with self-intersection number zero, hence there cannot be a curve with negative self-intersection. This proves that the coefficient of $x^{-1} y^{-1}$ in $\xi_m$ is nonzero. 

Now consider the lattice point $(0,m-1)$. In the power series $\xi_m(x+1, y+1)$ the term $x^{m-1}$ comes from two terms: $(x+1)^{-1}(y+1)^{-1}$ and $(x+1)^{m-1}$. We know that the first term contributes a nonzero coefficient to $x^{m-1}$. Hence the second term must contribute the same nonzero coefficient with negative sign. This shows that the coefficient of $x^{m-1}$ in $\xi_m$ is nonzero.

To see that $\xi_{m,K}$ vanishes exactly to order $m$ at $t_0$, consider the term with $x^m$ in the power series $\xi_m(x+1, y+1)$. Only $(x+1)^{-1}(y+1)^{-1}$ contributes to this term with nonzero coefficient. This implies that $\xi_m$ does not vanish to order $m+1$ at $t_0$.
\end{proof}

\begin{remark}
  It is possible to explicitly construct polynomials as in Proposition \ref{prop:family}:
  \begin{enumerate}[(1)]
  \item The first family can be constructed via either of the following recurrence relations:
    \begin{enumerate}[(a)]
    \item $\xi_1=1-\frac{1}{xy}$,
    \item $\xi_{m+1}=(x-1)\xi_m+y^{-1}(y-1)^{m+1}$,
    \item $\xi_{m+1}=(y-1)\xi_m+x^{-1}(x-1)^{m+1}$.
    \end{enumerate}
  \item The second family can be constructed via the following recurrence relation:
    \begin{enumerate}[(a)]
    \item $\xi_1=1-\frac{1}{xy}$,
    \item $\xi_2=\frac{1}{xy}-3+x+y$,
    \item $\xi_{m+2}=(x-1)\xi_{m+1}+x(y-1)^2\xi_m$.
    \end{enumerate}
  \end{enumerate}

 The polynomials $\xi_1$ and $\xi_2$ are the same in both families. This is because, even though the triangles $\Delta^0_i(m)$ are different for $m=1,2$, the configuration of lattice points in them is the same.

The recurrence formulas for the first family were used in the proofs of \cite{KuranoNishida, GGK}. We will not need the recurrences in this article.   
\end{remark}

%%%%%%%%%%%%%%%%%%%%%%%%%%%%%%%%%%%%%%%%%%%%%%%%%%%%%%%%%%%%%%%%%%%%%%%%%%%%%%%%%%%%
%%%%%%%%%%%%%%%%%%%%%%%%%%%%%%%%%%%%%%%%%%%%%%%%%%%%%%%%%%%%%%%%%%%%%%%%%%%%%%%%%%%%
%%%%                                                                            %%%%
%%%%                Section 3. Divisors on toric varieties                      %%%%
%%%%                                                                            %%%%
%%%%%%%%%%%%%%%%%%%%%%%%%%%%%%%%%%%%%%%%%%%%%%%%%%%%%%%%%%%%%%%%%%%%%%%%%%%%%%%%%%%%
%%%%%%%%%%%%%%%%%%%%%%%%%%%%%%%%%%%%%%%%%%%%%%%%%%%%%%%%%%%%%%%%%%%%%%%%%%%%%%%%%%%%

\section{Divisors and cohomology}
Let us fix the notation concerning divisors and equivalence classes. We will work over an arbitrary field $K$, but in order to make geometric statements, we will assume that $K$ is algebraically closed.

We consider normal $\QQ$-factorial surfaces $X$ defined over $K$.  The class group $\Cl(X)$ is the group of Weil divisors modulo linear equivalence. 
The vector space $N^1(X)=N_1(X)$ is the real vector space of numerical equivalence classes of Weil divisors (equivalently, curves). We denote by $C\cdot D$ the intersection product between curves. The nef cone of $X$ is the cone in $N_1(X)$ generated by classes of nef divisors. 
Its dual cone (also in $N_1(X)$ via the intersection pairing) is the closure of the cone of effective curves of $X$. 

When $X_\Delta$ is a toric variety defined by a rational triangle $\Delta$, an ample $T$-invariant $\QQ$-Weil divisor $H$ corresponds to a rational triangle $\Delta_H$ with sides parallel to the sides of $\Delta$.
Such $\QQ$-Weil divisor $H$ is Weil if and only if the three lines containing the edges of $\Delta_H$ contain lattice points. 
Two such divisors are linearly equivalent if their triangles differ by an integral translation. 
The divisors have the same numerical equivalence class if their triangles differ by a rational translation. (Thus, the size of the triangle gives the numerical equivalence class of the divisor.)

Given a triangle $\Delta_H$ of a Weil divisor $H$,  the space $H^0(X_\Delta, \cO(H))$ is the set of all Laurent polynomials supported in $\Delta_H$.

Let now $X=\Bl_{t_0} X_\Delta$ and let $\pi:X\to X_\Delta$ be the projection. Then, $N_1(X)$ has dimension $2$ with basis the pullback of an ample class $\pi^*H$ and the class of the exceptional curve $E$. 

\subsection{Some cohomological lemmas}   \label{section.cohomology.lemmas}

\begin{lemma}\label{lemma.toric.vanishing} 
Let $X_\Delta$ be a toric variety defined by a rational triangle $\Delta$, and let $A$ be a Weil divisor on $X_\Delta$. Then,
\[ H^1(X_\Delta,\cO_{X_\Delta}(A)) = 0.\]
\end{lemma}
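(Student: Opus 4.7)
The plan is to compute the cohomology directly via the Čech complex for the canonical torus-invariant affine cover of $X_\Delta$. Since $X_\Delta$ is toric, every Weil divisor is linearly equivalent to a torus-invariant one, so I may assume $A=\sum_{i=1}^{3}a_i D_i$, where $D_1,D_2,D_3$ are the torus-invariant prime divisors corresponding to the three rays $\rho_1,\rho_2,\rho_3$ of the fan $\Sigma$ of $X_\Delta$, with primitive generators $u_1,u_2,u_3\in N$. Let $\sigma_1,\sigma_2,\sigma_3$ denote the three maximal cones of $\Sigma$, one for each vertex of $\Delta$; each $\sigma_j$ contains exactly two of the three rays.

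The three affine opens $U_{\sigma_j}$ form an acyclic cover of $X_\Delta$ for the coherent sheaf $\cO_{X_\Delta}(A)$, so I would compute $H^1$ as the first Čech cohomology of this cover. This Čech complex carries an $M$-grading, and in each graded piece $\check{C}^{\bullet}_m$ every term is either $K$ or $0$, according to whether the inequalities $\langle m, u_\rho\rangle \geq -a_\rho$ hold on all rays $\rho$ of the corresponding cone (or intersection of cones).

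The crucial feature of a triangle fan is that any two of its three maximal cones share exactly one ray, so the nerve of the affine cover is the full $2$-simplex. Letting $S(m)=\{\,i : \langle m, u_i\rangle \geq -a_i\,\}\subseteq\{1,2,3\}$, each graded Čech complex $\check{C}^{\bullet}_m$ is entirely determined by $S(m)$. A short case analysis on $|S(m)|\in\{0,1,2,3\}$ then shows that $H^1(\check{C}^{\bullet}_m)=0$ in every case; summing over $m\in M$ yields $H^1(X_\Delta,\cO(A))=0$.

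The main obstacle is the bookkeeping in the case analysis, though the underlying geometric reason for the vanishing is transparent: the subcomplexes of the $2$-simplex one gets by selecting a set of maximal cones---and automatically including a shared ray whenever both adjacent cones are present---are always empty or connected, so the reduced $\tilde{H}^0$ of the nerve (which governs $H^1$ in each graded piece) vanishes.
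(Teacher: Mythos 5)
Your approach is correct but genuinely different from the paper's. The paper's proof is a two-line reduction: since $X_\Delta$ has Picard number one, either $A$ or $-A$ is nef (every Weil divisor on a simplicial toric surface is $\QQ$-Cartier), and Demazure and Batyrev--Borisov vanishing \cite[Theorem 9.3.5]{ToricVarsBookCoxLS} then kill $H^1$ in either case. Your direct, $M$-graded \v{C}ech computation over the three-cone cover is elementary and self-contained, and the case analysis on $|S(m)|\in\{0,1,2,3\}$ does close: the four graded complexes are $K^3\to K^3\to K$, $K\to K^2\to K$, $0\to K\to K$, and $0\to 0\to K$, each with vanishing $H^1$ because the restriction maps are isomorphisms in each graded piece. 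What your route buys is independence from the vanishing theorems and an explicit picture of where cohomology can live (only $H^0$ and $H^2$); what the paper's buys is brevity and the conceptual point that only Picard number one is used. One caution on your closing gloss: the rule you state for the nerve is not the right one. A term of $\check{C}^1_m$ is nonzero exactly when the single shared ray satisfies its inequality, which can happen even when neither adjacent maximal cone contributes to $\check{C}^0_m$ (this is precisely the case $|S(m)|=1$, giving $0\to K\to K$), and $\check{C}^2_m=K$ unconditionally; so the graded complex is not the cochain complex of the subcomplex you describe, and following that heuristic literally would mislead. The clean topological reformulation is the standard one \cite[Theorem 9.1.3]{ToricVarsBookCoxLS}: $H^1(X_\Delta,\cO(A))_m\cong\tilde{H}^0(V_{A,m})$ with $V_{A,m}$ spanned by the rays \emph{failing} the inequality, and with three rays, all pairs of which span cones, $V_{A,m}$ is always empty or connected. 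Stick to your first-stated plan of computing the actual \v{C}ech complexes case by case; that version of the argument is sound.
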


\begin{proof}
  Since $X_{\Delta}$ has Picard-number one, either $A$ is nef or $-A$ is nef. 
  In either case, we conclude that $ H^1(X_\Delta,\oo_{X_\Delta}(A))=0$ by the Demazure and Batyrev-Borisov vanishing theorems in \cite[Theorem 9.3.5]{ToricVarsBookCoxLS}.
\end{proof}

\begin{proposition}         \label{proposition.direct.images} 
Consider the blowup $\pi: X\rightarrow Y$ of a surface $Y$ at a smooth closed point $t_0$, and the sheaf $\mathcal{F}=\oo_X(\pi^*A - mE)$, where $A$ is a Weil divisor on $Y$ and $E$ is the exceptional divisor. %Now we have 3 claims about the pushforward and higher direct images of $\mathcal{F}$:
Then: 
\begin{enumerate}[(a)] \setlength\itemsep{2mm}

  \item  $\displaystyle
\pi_*\mathcal{F} = \oo_Y(A)\tensor \pi_*\oo_X(-mE)=
\begin{cases}
\oo_Y(A),\qquad \text{if } m \leq 0;\\
\oo_Y(A)\tensor I_{t_0}^m,\quad \text{if } m > 0.
\end{cases}
$           %[(Lemma 5 and part of corollary 8)]
  
\item ${\displaystyle R^1\pi_*\mathcal{F} = 0, \quad \textnormal{if } m\geq -1.}$     % [Lemma 7]
   
\item  $\displaystyle
H^1(X,\mathcal{F})=
\begin{cases}
H^1(Y,\oo_Y(A)),\quad \text{if } m=-1 \text{ or } m=0;\\
H^1(Y,\oo_Y(A)\tensor I_{t_0}^m),\quad \text{if } m > 0.
\end{cases}
$       %[corollary 8]
 \end{enumerate}
 Here $I_{t_0}$ is the ideal sheaf of the point $t_0$. 

\end{proposition}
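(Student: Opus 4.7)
The plan is to reduce everything to a computation of $R^i\pi_*\oo_X(-mE)$ via the projection formula, and then obtain part (c) from the Leray spectral sequence.

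For part (a), I will first invoke the projection formula on $\mathcal{F}=\pi^*\oo_Y(A)\otimes \oo_X(-mE)$. Since $Y$ is smooth at $t_0$, the divisorial sheaf $\oo_Y(A)$ is locally free in a neighborhood of $t_0$, and away from $t_0$ the morphism $\pi$ is an isomorphism, so the projection formula applies and yields $\pi_*\mathcal{F}=\oo_Y(A)\otimes \pi_*\oo_X(-mE)$. If $m\leq 0$, then $-mE$ is effective and supported on the exceptional curve, so $\pi_*\oo_X(-mE)=\oo_Y$. If $m>0$, then $\pi_*\oo_X(-mE)=I_{t_0}^m$ is the classical identification of powers of the exceptional divisor with powers of the ideal sheaf for the blowup of a smooth surface point, which can be verified directly on an affine chart using the $\Proj$ description of the blowup.

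For part (b), the same projection formula argument reduces the claim to $R^1\pi_*\oo_X(-mE)=0$ for $m\geq -1$. This sheaf is supported at $t_0$, so it suffices to show its completion at $t_0$ vanishes. By the theorem on formal functions this completion equals
\[
\varprojlim_{n} H^1(nE,\oo_X(-mE)\otimes \oo_{nE}).
\]
Filtering $\oo_X(-mE)\otimes \oo_{nE}$ by the powers of the ideal sheaf of $E$ produces successive graded quotients isomorphic to $\oo_E(-(m+k)E)\cong \oo_{\PP^1}(m+k)$ for $0\leq k\leq n-1$, where I use that $E\cong \PP^1$ with normal bundle $\oo(-1)$ because $t_0$ is a smooth point of $Y$. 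The hypothesis $m\geq -1$ forces $m+k\geq -1$, so every graded piece has vanishing $H^1$, and induction on $n$ through the associated short exact sequences shows $H^1(nE,\oo_X(-mE)\otimes \oo_{nE})=0$ for every $n$. Hence $R^1\pi_*\oo_X(-mE)=0$.

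For part (c), I will invoke the Leray spectral sequence $E_2^{p,q}=H^p(Y,R^q\pi_*\mathcal{F})\Rightarrow H^{p+q}(X,\mathcal{F})$. Since part (b) supplies $R^1\pi_*\mathcal{F}=0$ whenever $m\geq -1$, the five-term exact sequence collapses to an isomorphism $H^1(X,\mathcal{F})\cong H^1(Y,\pi_*\mathcal{F})$, and inserting the formulas of part (a) produces the three cases in the statement. The main obstacle is the formal-function computation in part (b), namely verifying that the filtration on $\oo_X(-mE)\otimes \oo_{nE}$ genuinely has the asserted graded pieces and that the inductive step and the inverse limit are under control; this is routine once the local model of a blowup at a smooth point is in place, but is the only nontrivial step of the argument.
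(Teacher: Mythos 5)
Your argument is correct, and parts (a) and (c) follow essentially the same route as the paper: the paper also proves (a) by covering $Y$ with an open set where $\pi$ is an isomorphism and an open set around $t_0$ where the Weil divisor $A$ is Cartier (so the projection formula applies), and it also obtains (c) formally from (a) and (b). The genuine difference is in part (b). The paper localizes around $t_0$, uses an \'etale map $Y\to\A^2$ to build a fiber square with $\rho:\Bl_0\A^2\to\A^2$, applies flat base change to reduce to $R^1\rho_*\cO_{\Bl_0\A^2}(-mE)$, and then kills this by toric vanishing: $-mE$ is nef for $m\geq 0$, and for $m=-1$ it writes $E$ as the round-down of a nef $\QQ$-divisor $\tfrac12\rho^*D_1+\tfrac12\rho^*D_2$. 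You instead invoke the theorem on formal functions and compute $\varprojlim_n H^1(nE,\cO_X(-mE)\otimes\cO_{nE})$ by filtering with powers of the ideal of $E$, getting graded pieces $\cO_{\PP^1}(m+k)$ with $m+k\geq -1$; this is the classical surface-blowup computation (as in Hartshorne V.3) and is correct since $X$ is smooth in a neighborhood of $E$. Your route is more elementary and self-contained, avoiding both the \'etale local model and the toric vanishing theorems; the paper's route is shorter on the page because it outsources the cohomology vanishing to cited results and reuses toric machinery already present elsewhere in the article. Both are complete proofs.
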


\begin{proof}
Part (c) follows directly from (a) and (b).
To prove (a) and (b) we use that the problem is local in $Y$. 

For (a), consider the map $\phi: \oo_{Y}(A)\otimes \pi_*\oo_{X}(mE)\to \pi_*(\pi^*\oo_{Y}(A)\otimes\oo_{X}(mE))$, induced by the adjunction $\pi^*\dashv\pi_*$ from the natural map $\pi^*(\oo_{Y}(A)\otimes\pi_*\oo_{X}(mE))\to \pi^*\oo_{Y}(A)\otimes\oo_{X}(mE)$. 
The map $\phi$ is an isomorphism over any open subset where either $\pi$ is an isomorphism or $A$ is Cartier (by the projection formula). We can cover $Y$ with two open subsets where one of these cases applies.

In (b), replacing $Y$ with a small affine neighborhood of $t_0$, we may assume that $A=0$ and there exists a fiber square
\[ \xymatrix{
 X  \ar[d]^{\pi} \ar[r]^{\psi} &  \Bl_0 \A^2 \ar[d]^{\rho} \\
  Y  \ar[r]^{\phi} &  \A^2, 
}
\]
where the morphism $\phi$ is \'etale. By \cite[Proposition III.9.3]{Hartshorne} we have
\[ R^1 \pi_*\mathcal{F} = R^1 \pi_* \psi^* \cO_{\Bl_0 \A^2} (-mE) \isom \phi^* R^1 \rho_* \cO_{\Bl_0 \A^2} (-mE).\]
Here we have denoted by $E$ also the exceptional curve in $\Bl_0 \A^2$.
We may thus replace the blowup of $Y$ with the blowup of $\A^2$ at the origin. This last morphism is toric and we can use toric vanishing theorems. For $m\geq 0$, the divisor $-mE$ is nef on $\Bl_0 \A^2$ and hence its higher cohomology vanishes. For $m=-1$, the divisor $E$ on $\Bl_0 \A^2$ can be written as the round-down of a nef $\QQ$-divisor $D$, for example
\[ \cO_{\Bl_0 \A^2} (E) = \cO_{\Bl_0 \A^2} \left( \frac{1}{2}\rho^*D_1 + \frac{1}{2} \rho^*D_2  \right) \]
where $D_1, D_2$ are the toric irreducible divisors on $\A^2$. Now, the $\QQ$-divisor $D$ is nef, hence $\cO_{\Bl_0 \A^2} (D)$ has no higher cohomology \cite[Theorem 9.3.5]{ToricVarsBookCoxLS}.
\end{proof}

\begin{corollary}      \label{h1corollary}
  Let $\pi:X\to X_\Delta$ be the blowup of the toric variety $X_\Delta$, associated to a rational triangle $\Delta$, at the point $t_0=(1,1)$. Consider any toric Weil divisor $A$ in $X_\Delta$ and the sheaf $\cF = \oo_X(\pi^*A - mE)$. 
\begin{enumerate}[(a)] \setlength\itemsep{2mm}

\item If $m=-1$ or $m=0$, then ${\displaystyle H^1(X,\cF)=0}$. 

\item If $m>0$, then ${\displaystyle H^1(X,\cF)=0}$ if and only if the evaluation  map $H^0(\oo_{X_\Delta}(A))\to H^0(\oo_{X_\Delta}(A)\otimes \cO_{X_\Delta}/I_{t_0}^m)$ is surjective.

\item If $m=1$, then ${\displaystyle H^1(X,\cF)=0}$ if and only if $H^0(\oo_{X_\Delta}(A))\neq 0$.

\end{enumerate}
 
\end{corollary}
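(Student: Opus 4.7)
The proof is a direct combination of the two preceding results (Lemma \ref{lemma.toric.vanishing} and Proposition \ref{proposition.direct.images}) with the long exact sequence associated to the ideal sheaf of $t_0$. Part (a) is immediate: Proposition \ref{proposition.direct.images}(c) identifies $H^1(X,\cF)$ with $H^1(X_\Delta,\oo_{X_\Delta}(A))$ when $m\in\{-1,0\}$, and this group vanishes by Lemma \ref{lemma.toric.vanishing}, since $X_\Delta$ is a toric variety of Picard number one.

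For part (b), I would apply Proposition \ref{proposition.direct.images}(c) with $m>0$ to obtain $H^1(X,\cF)=H^1(X_\Delta,\oo_{X_\Delta}(A)\otimes I_{t_0}^m)$. Tensoring the short exact sequence
\[
0\to I_{t_0}^m\to \oo_{X_\Delta}\to \oo_{X_\Delta}/I_{t_0}^m\to 0
\]
with the line bundle $\oo_{X_\Delta}(A)$ (which is locally free near the smooth point $t_0$, so that tensoring preserves exactness) and passing to the associated long exact sequence gives
\[
H^0(\oo_{X_\Delta}(A))\to H^0\bigl(\oo_{X_\Delta}(A)\otimes\oo_{X_\Delta}/I_{t_0}^m\bigr)\to H^1\bigl(\oo_{X_\Delta}(A)\otimes I_{t_0}^m\bigr)\to H^1(\oo_{X_\Delta}(A)).
\]
The rightmost term vanishes by Lemma \ref{lemma.toric.vanishing}, so the middle $H^1$ vanishes if and only if the first arrow, i.e.\ the evaluation map, is surjective. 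This proves (b).

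For part (c), I would specialize (b) to $m=1$. Then $\oo_{X_\Delta}/I_{t_0}$ is the skyscraper sheaf at $t_0$, so the target of the evaluation map is a one-dimensional $K$-vector space (the fibre of $\oo_{X_\Delta}(A)$ at the smooth point $t_0$). Surjectivity of the evaluation map is therefore equivalent to its nonvanishing. If $H^0(\oo_{X_\Delta}(A))=0$ the map is trivially zero, while if $H^0(\oo_{X_\Delta}(A))\neq 0$ then (using the description of global sections recalled just before \S\ref{section.cohomology.lemmas}) the polytope $\Delta_A$ contains a lattice point, and the corresponding Laurent monomial $x^ay^b\in H^0(\oo_{X_\Delta}(A))$ evaluates at $t_0=(1,1)$ to $1\neq 0$, so the evaluation map is nonzero.

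The argument is essentially bookkeeping, and no step presents a genuine obstacle; the only subtle point worth double-checking is that $\oo_{X_\Delta}(A)$ is invertible in a neighborhood of $t_0$ (so that the tensored sequence in (b) remains short exact), but this holds because $t_0$ lies in the open torus orbit where every Weil divisor is Cartier.
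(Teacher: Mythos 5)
Your proof is correct and follows essentially the same route as the paper: part (a) by combining Proposition~\ref{proposition.direct.images}(c) with Lemma~\ref{lemma.toric.vanishing}, part (b) via the long exact sequence of $0\to I_{t_0}^m\to\oo_{X_\Delta}\to\oo_{X_\Delta}/I_{t_0}^m\to 0$ twisted by $\oo_{X_\Delta}(A)$, and part (c) by noting the target is one-dimensional and that a monomial section $\chi^u$ evaluates to $1$ at $t_0=(1,1)$. Your extra remark about invertibility of $\oo_{X_\Delta}(A)$ near $t_0$ is a correct and worthwhile precision that the paper leaves implicit.
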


\begin{proof}
  Part (a) follows from Proposition~\ref{proposition.direct.images}\,(c) and Lemma~\ref{lemma.toric.vanishing}.  

 The vanishing in part (b) is by Proposition~\ref{proposition.direct.images}\,(c)  equivalent to the vanishing of $ H^1(X_\Delta,\oo_X(A)\tensor I_{t_0}^k)$. The conclusion now follows by considering the exact sequence
 \[
H^0(\oo_{X_\Delta}(A)) \rightarrow H^0(\oo_{X_\Delta}(A)\otimes\cO_{X_\Delta}/I_{t_0}^m )   \rightarrow   H^1(\oo_{X_\Delta}(A)\otimes I_{t_0}^m)   \rightarrow   H^1(\oo_{X_\Delta}(A))=0.   \]
 
 For (c), notice that for $m=1$ the surjectivity of the evaluation map in (b) is equivalent to $H^0(\oo_{X_\Delta}(A))\neq 0$. Indeed, $H^0(\oo_{X_\Delta}(A)\otimes \oo_{t_0})$ is a one-dimensional vector space and the image of a section $\chi^u \in H^0(\oo_{X_\Delta}(A))$ is nonzero.
\end{proof}

%%%%%%%%%%%%%%%%%%%%%%%%%%%%%%%%%%%%%%%%%%%%%%%%%%%%%%%%%%%%%%%%%%%%%%%%%%%%%%%%%%%%
%%%%%%%%%%%%%%%%%%%%%%%%%%%%%%%%%%%%%%%%%%%%%%%%%%%%%%%%%%%%%%%%%%%%%%%%%%%%%%%%%%%%
%%%%                                                                            %%%%
%%%%                Section 4. The method of Kurano and Nishida                 %%%%
%%%%                                                                            %%%%
%%%%%%%%%%%%%%%%%%%%%%%%%%%%%%%%%%%%%%%%%%%%%%%%%%%%%%%%%%%%%%%%%%%%%%%%%%%%%%%%%%%%
%%%%%%%%%%%%%%%%%%%%%%%%%%%%%%%%%%%%%%%%%%%%%%%%%%%%%%%%%%%%%%%%%%%%%%%%%%%%%%%%%%%%

\section{The method of Kurano and Nishida}
\label{section4}

In this section we prove variants of some results of Cutkosky \cite{Cutkosky} and Kurano and Nishida \cite{KuranoNishida} that we will use in the proof of Theorem~\ref{main-thm}. 
These results were originally proved in the case of weighted projective planes. In \cite{GGK} and in this section they are generalized to the case of toric varieties $X_\Delta$ and more general situations.

\subsection{The Huneke condition and the set $HC_K$}

We fix a rational triangle $\Delta$ defining the pair  $(X_\Delta, H)$, a toric variety and an ample class on it. Let $X=\Bl_{t_0} X_\Delta$. We study these varieties defined over various algebraically closed fields $K$. To emphasize the field, let us call the varieties $X_{\Delta,K}$ and $X_K$. We are mainly interested in the case where either $K=k$ is the base field, or where $K=\overline{\FF}_p$.

We also fix a negative curve $C$ in $X$. We assume that $C$ is defined by a polynomial with integer coefficients, hence we have $C_K\subset X_K$. We assume further that $C_K$ is irreducible for any $K$ and its class is equal to $H-mE$, independent of $K$.  This implies that $C_K\subset X_K$ is a negative curve.     

The following theorem was proved by Cutkosky \cite{Cutkosky} and generalized in \cite{GGK}.

\begin{theorem} \label{thm-cut}
Let $X_K$ be as above, with $C_K\subset X_K$ a negative curve. Then $X_K$ is a MDS if and only if there exists a nonzero effective divisor $D_K\subset X_K$ such that $C_K\cap D_K = \emptyset$. \qed
\end{theorem}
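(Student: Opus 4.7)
The plan is to invoke the Hu--Keel characterization: $X_K$ is an MDS if and only if its cone of effective divisors is rational polyhedral and every nef $\mathbb{Q}$-Cartier divisor is semi-ample. I will first describe the cone structure on $X_K$, common to both directions, and then address each direction in turn.

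Since $N^1(X_K)$ is two-dimensional with basis $\pi^*H,E$, and both $C_K$ (with $C_K^2<0$) and $E$ (with $E^2=-1$) have negative self-intersection, they span distinct extremal rays of the cone of effective curves; this cone is therefore the rational polyhedral cone generated by $[C_K]$ and $[E]$. Dualizing, the nef cone has two extremal rays: the ray spanned by $\pi^*H$, which is orthogonal to $E$, and a second ray spanned by a class $L_0$ orthogonal to $C_K$. The intersection form has signature $(1,1)$ by the Hodge index theorem, so the orthogonal complement of the negative-self-intersection class $[C_K]$ consists of classes of positive self-intersection; in particular $L_0^2>0$, so $L_0$ is big. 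Since $\pi^*H$ is visibly semi-ample via the morphism $\pi$, the MDS condition for $X_K$ reduces to the semi-ampleness of $L_0$.

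For the direction $(\Rightarrow)$, assume $X_K$ is an MDS, so $L_0$ is semi-ample. For $n$ sufficiently divisible, $|nL_0|$ is base-point-free and defines a projective contraction $\phi:X_K\to Y$ that collapses $C_K$ to a single point $p\in Y$ (since $L_0\cdot C_K=0$ and $C_K$ is an irreducible curve with $C_K^2<0$). Choosing an effective ample divisor $A$ on $Y$ whose support avoids $p$ (a general member of $|mH_Y|$ for an ample $H_Y$ and $m\gg 0$ suffices), the pullback $D_K:=\phi^*A$ is an effective divisor on $X_K$ with $D_K\cap C_K=\emptyset$. For the direction $(\Leftarrow)$, given such $D_K$, the fact that $D_K$ does not contain the irreducible curve $C_K$ and is set-theoretically disjoint from it forces $D_K\cdot C_K=0$, placing $[D_K]$ on the ray of $L_0$ and making $D_K$ big and nef. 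The surface $X_K$ has klt singularities (inherited from the toric variety $X_{\Delta,K}$, since $t_0$ lies in the smooth torus so the blowup introduces no new singularities), so the Kawamata--Shokurov base-point-free theorem applied to the big nef divisor $D_K$ yields that $D_K$ is semi-ample. Combined with the semi-ampleness of $\pi^*H$ and the convexity of the semi-ample cone, this shows that the nef cone equals the semi-ample cone, and Hu--Keel concludes that $X_K$ is an MDS.

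The principal obstacle is the semi-ampleness step in $(\Leftarrow)$, which is the only place where non-formal input is needed: the klt property of $X_K$ is essential for invoking the base-point-free theorem (or, alternatively, a surface-specific argument \emph{\`a la} Zariski). The remaining content---the cone analysis and the forward direction---follows formally from $\dim N^1(X_K)=2$, the Hodge index theorem, and the existence of the blowup contraction $\pi$.
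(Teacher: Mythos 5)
Your cone analysis and your forward direction are sound: with $N^1(X_K)$ of rank two, the closed effective cone is spanned by $[C_K]$ and $[E]$, the nef cone by $\pi^*H$ and a big rational class $L_0$ orthogonal to $C_K$, and if $X_K$ is a MDS then $L_0$ is semi-ample, so pulling back an ample divisor through the contraction of $C_K$, chosen to miss the image point, produces the required $D_K$. Note that the paper does not actually prove this statement --- it quotes it from \cite{Cutkosky} and \cite{GGK} --- so the comparison below is with the standard argument rather than with a proof in the text.

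The converse direction has a genuine gap at exactly the step you yourself flag as the crux. Having shown $D_K$ is big and nef, you invoke the Kawamata--Shokurov base-point-free theorem using only that $X_K$ is klt. That theorem requires $aD_K-K_{X_K}$ to be nef and big for some $a>0$; here $(aD_K-K_{X_K})\cdot C_K=-K_{X_K}\cdot C_K=2-2p_a(C_K)-C_K^2$, which is negative for most of the curves in this paper (their arithmetic genus grows with $m$ while $C_K^2$ stays close to $0$), so the hypothesis fails and cannot be rescued by increasing $a$. More decisively: at this step you use only the numerical class of $D_K$, not the set-theoretic disjointness $C_K\cap D_K=\emptyset$. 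If ``big and nef implies semi-ample'' held on these surfaces, then $L_0$ itself would always be semi-ample and every $X_K$ considered in the paper would be a MDS, contradicting Theorem~\ref{main-thm}; the entire phenomenon under study is that $|nL_0|$ is nonempty for $n\gg0$ while every member may contain $C_K$. A correct argument must use the disjointness geometrically. For instance, the null locus of the nef divisor $D_K$ is exactly $C_K$ (the only irreducible curve on the orthogonal boundary ray), and $\cO_X(D_K)|_{C_K}\cong\cO_{C_K}$ precisely because the supports are disjoint, so the Zariski--Fujita semi-ampleness criterion applies; alternatively, one proves finite generation of the Cox ring directly from the Koszul sequence
\[
0\to\cO_X(F-C_K-D_K)\to\cO_X(F-C_K)\oplus\cO_X(F-D_K)\to\cO_X(F)\to0,
\]
whose exactness on the right is exactly where $C_K\cap D_K=\emptyset$ enters; this is Cutkosky's route and is the same device the paper deploys in Proposition~\ref{HCProposition}.
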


The curves $D_K$ in the theorem should be viewed as effective Weil divisors.
The class of $D_K$ is orthogonal to the class of $C_K$, hence it spans the boundary ray of the nef cone of $X_K$. Thus, the class of $D_K$ is determined up to a positive constant.

When the field $K$ has positive characteristic, then  the existence of  $C_K$ implies the existence of $D_K$ and hence $X_K$ is a MDS, see \cite{Artin62, Cutkosky}. The idea of the characteristic $p$ methods is to study these curves $D_K$ when $K$ has characteristic $p$ to say something about the case of characteristic $0$.

Let us fix a class $[D_0]\in Cl(X)$, for example by fixing an actual divisor $D_0$, such that $C\cdot D_0 = 0$. 
Let $[D_0]$ have the form
\[  [D_0]=  \pi^* H' - m'E, \]
 where $m'>0$ is an integer and $H'$ is given by a rational triangle $\Delta'$. The numerical equivalence class of $D_0$ generates the boundary ray of the nef cone of $X_K$ for any $K$. 
 
Following Kurano and Nishida \cite{KuranoNishida} we define the set $HC_K$ as follows.
 
\begin{definition}
\[ HC_K=\{ l\in \ZZ_{>0} | X_K \text{ contains a divisor } D_K\in |l D_0| \text{ such that } C_K\cap D_K = \emptyset\}.\]
\end{definition}

It follows from Theorem~\ref{thm-cut} that $X_K$ is a MDS if and only if $HC_K\neq \emptyset$. 

The set $HC_K$ is closed under addition (by adding the corresponding curves $D_K$). This implies that it is a sub-semigroup of $\ZZ_{>0}$ and there exist integers $l_0, N$ such that    
\[HC_K \subseteq l_0 \ZZ \quad \text{and} \quad HC_K \cap \ZZ_{>N} = l_0 \ZZ \cap \ZZ_{>N}. \] 

Since $C_K\cdot D_K = 0$ and $C_K$ is irreducible, the condition $C_K\cap D_K = \emptyset$ is equivalent to $C_K \not\subseteq D_K$.  
In the examples below we  fix a point $P$ in  $C_K$ and check that $P\notin D_K$. We choose for $P$ a $T$-fixed point in $X_\Delta$ corresponding to a vertex of $\Delta$. Then, $P\in C_K$ if and only if the vertex does not lie in the Newton polytope of the polynomial defining $C_K$. A similar condition holds for $P\in D_K$. It follows that checking if a fixed $l$ lies in $HC_K$ is a finite dimensional linear algebra problem. We look for a polynomial that vanishes to order $lm'$ at $t_0$. The Newton polytope of the polynomial must lie in $l\Delta'$ and include the vertex corresponding to $P$.

\begin{lemma} \label{lemma.pos.0.char}
A fixed $l$ lies in $HC_k$ if and only if it lies in $HC_{\overline{\FF}_p}$ for all primes $p\gg 0$. 
\end{lemma}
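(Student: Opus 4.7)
The plan is to translate the condition $l \in HC_K$ into a rank condition on integer matrices, and then invoke the classical fact that the rank of an integer matrix over $\overline{\FF}_p$ coincides with its rank over $\QQ$ for all but finitely many primes $p$.

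I would first reformulate $l \in HC_K$ linear-algebraically. By Proposition~\ref{proposition.direct.images}\,(a), $H^0(X_K, \cO(lD_0))$ is identified with the $K$-vector space $V_K$ of Laurent polynomials supported in $l\Delta'$ that vanish to order at least $lm'$ at $t_0$. For any $g \in V_K$, the associated effective $D_K = \operatorname{div}(g) \in |lD_0|$ satisfies $C_K \cap D_K = \emptyset$ iff $C_K \not\subseteq D_K$ (since $C_K$ is irreducible with $C_K \cdot D_K = 0$); and by the irreducibility of $\xi_m$ from Proposition~\ref{prop:family}, this is in turn equivalent to $\xi_m \nmid g$ in $K[x^{\pm 1}, y^{\pm 1}]$. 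Thus $l \in HC_K$ iff $V_K \not\subseteq (\xi_m) \cdot K[x^{\pm 1}, y^{\pm 1}]$.

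Next, I would realize both $V_K$ and $V_K \cap (\xi_m)$ via integer matrices. Let $N = |l\Delta' \cap \ZZ^2|$ and let $M$ be the integer matrix whose rows encode the $\binom{lm'+1}{2}$ linear conditions for order-$lm'$ vanishing at $t_0$ (read from the Taylor expansion of $g(x+1, y+1)$), so $V_K = \ker(M_K)$ and $\dim_K V_K = N - \operatorname{rank}_K M$. Let $P$ be the Newton polytope of $\xi_m$ over $\QQ$, and let $\mu : \ZZ^{N'} \to \ZZ^N$ be the integer matrix of multiplication by $\xi_m$, where $N' = |(l\Delta' \ominus P) \cap \ZZ^2|$. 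For primes $p$ outside a finite set, the reduction $\xi_m \bmod p$ still has Newton polytope $P$, so multiplication by $\xi_m$ in characteristic $p$ is described by $\mu \bmod p$. Since $\xi_{m,K}$ is nonzero in every characteristic (Proposition~\ref{prop:family}), $\mu_K$ is injective, and therefore $(\xi_m) \cap V_K \cong \mu_K^{-1}(V_K) = \ker((M\mu)_K)$, of dimension $N' - \operatorname{rank}_K(M\mu)$. Consequently, $l \in HC_K$ is equivalent to the strict inequality
\[
N - \operatorname{rank}_K M \;>\; N' - \operatorname{rank}_K(M \mu).
\]

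Finally, I would conclude by rank stability: for a fixed integer matrix $A$, one has $\operatorname{rank}_k A = \operatorname{rank}_\QQ A$ for every field $k$ of characteristic $0$, and $\operatorname{rank}_{\FF_p}(A \bmod p) = \operatorname{rank}_\QQ A$ for every prime $p$ outside a finite set (those dividing every maximal nonzero minor of $A$). Applied to both $M$ and $M\mu$, and excluding also the finitely many primes at which $\xi_m \bmod p$ has strictly smaller Newton polytope, the rank inequality above holds over $K = k$ iff it holds over $K = \overline{\FF}_p$ for $p \gg 0$, yielding the lemma. The main obstacle is the bookkeeping around the finitely many bad primes for which $\operatorname{Newton}(\xi_m \bmod p) \subsetneq P$ — these alter the precise domain of the multiplication matrix $\mu$ — but they are absorbed without cost by the "$p \gg 0$" quantifier.
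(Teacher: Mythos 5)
Your proof is correct, but note that the paper does not actually reprove this lemma: it cites \cite{KuranoNishida} and \cite[Lemma 5.1]{GGK}, remarking only that the cited proof assumes $C$ passes through a $T$-fixed point and that this assumption ``can be easily removed.'' Both your argument and the cited one run on the same engine --- translate $l\in HC_K$ into an inequality between ranks of integer matrices and use the fact that an integer matrix has the same rank over $\QQ$ and over $\FF_p$ for all $p\gg 0$ --- but they linearize the disjointness condition differently. The proof in \cite{GGK} replaces $C_K\cap D_K=\emptyset$ by the requirement that some section supported in $l\Delta'$ and vanishing to order $lm'$ at $t_0$ has nonzero coefficient at a vertex of $l\Delta'$ through which $C$ passes, so the rank comparison involves only one extra linear functional; the price is the $T$-fixed-point hypothesis. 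You instead encode $C_K\not\subseteq D_K$ as non-divisibility of the section by $\xi_m$ and compare $\dim\ker M$ with $\dim\ker(M\mu)$ for the integer multiplication-by-$\xi_m$ matrix $\mu$; the price is a further finite set of bad primes (where the Newton polytope of $\xi_m\bmod p$ shrinks), which is harmless under the $p\gg 0$ quantifier, and the payoff is that your version needs no $T$-fixed-point assumption, i.e.\ it supplies exactly the generalization the paper asserts without proof. The ingredients you rely on --- irreducibility and nonvanishing of $\xi_m$ in every characteristic, $C_K\cdot D_K=0$, and the identification of $H^0(X,\cO(lD_0))$ with Laurent polynomials supported in $l\Delta'$ vanishing to order $lm'$ at $t_0$ --- are all available from Propositions \ref{prop:family} and \ref{proposition.direct.images} and the standing assumptions of Section \ref{section4}, so your argument is complete as written.
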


The lemma is proved in \cite{KuranoNishida} and \cite[Lemma 5.1]{GGK}. The proof in \cite{GGK} assumes that $C$ passes through a $T$-fixed point $P$, but this assumption can be easily removed.

\begin{proposition} \label{HCProposition}   
Suppose that $l,l+\mu\in HC_K$ for some $l, \mu \in \mathbb{Z}_{>0}$ 
and that $H^1(X,\oo_X(\mu D_0 - nC))=0$ for some $n \in \mathbb{Z}_{>0}$.
Then, $\mu\in HC_K$.
\end{proposition}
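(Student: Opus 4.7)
The plan is to transfer the disjointness of $D_l$ and $D_{l+\mu}$ from $C$ into a trivialization of $\mathcal{O}_{nC}(\mu D_0)$, and then lift it back to $X$ using the vanishing hypothesis.

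\textbf{Step 1 (Restriction sequence).} Start with the short exact sequence
\[
0 \longrightarrow \mathcal{O}_X(\mu D_0 - nC) \longrightarrow \mathcal{O}_X(\mu D_0) \longrightarrow \mathcal{O}_X(\mu D_0)|_{nC} \longrightarrow 0
\]
obtained from multiplying by a local equation of $C$. The hypothesis $H^1(X,\mathcal{O}_X(\mu D_0 - nC)) = 0$ then gives surjectivity of the restriction
\[
H^0(X,\mathcal{O}_X(\mu D_0)) \twoheadrightarrow H^0(nC,\mathcal{O}_X(\mu D_0)|_{nC}).
\]

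\textbf{Step 2 (Trivialization on $nC$).} Fix effective divisors $D_l \in |l D_0|$ and $D_{l+\mu} \in |(l+\mu)D_0|$ with $D_l \cap C = \emptyset = D_{l+\mu} \cap C$; these exist by hypothesis. View $D_l$ and $D_{l+\mu}$ as sections of $\mathcal{O}_X(lD_0)$ and $\mathcal{O}_X((l+\mu)D_0)$. Since these divisors avoid $C$ set-theoretically, their restrictions $D_l|_{nC}$ and $D_{l+\mu}|_{nC}$ are units in $\mathcal{O}_{nC}$ at every closed point of $C=(nC)_{\mathrm{red}}$, and so are nowhere vanishing sections of line bundles on $nC$. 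Form the ratio
\[
s \;=\; \frac{D_{l+\mu}|_{nC}}{D_l|_{nC}} \;\in\; H^0\bigl(nC,\mathcal{O}_X(\mu D_0)|_{nC}\bigr).
\]
This $s$ is a unit at every point of $nC$.

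\textbf{Step 3 (Lifting).} By Step 1, choose $\tilde{s} \in H^0(X,\mathcal{O}_X(\mu D_0))$ restricting to $s$. Since $s$ is nowhere zero on $nC$, the section $\tilde{s}$ is nonzero; let $D_\mu$ be its vanishing divisor, so $D_\mu \in |\mu D_0|$ is effective. For any closed point $p \in C$, the stalk of $\tilde{s}$ at $p$ reduces mod the ideal of $nC$ to the unit $s_p$; hence $\tilde{s}_p \in \mathcal{O}_{X,p}^\times$, so $p \notin D_\mu$. Thus $D_\mu \cap C = \emptyset$, proving $\mu \in HC_K$.

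\textbf{Expected main obstacle.} The only subtle point is to argue that the divisors $D_l, D_{l+\mu}$ restrict to units on the thickened curve $nC$, not merely on $C$ itself. Set-theoretic disjointness from the reduced curve $C$ forces the local equation to be a unit in $\mathcal{O}_{X,p}$ at each $p \in C$, which is then still a unit after passing to the quotient $\mathcal{O}_{X,p}/I_C^n$. Once this is checked, the rest is formal manipulation of the restriction sequence and the hypothesis $H^1=0$.
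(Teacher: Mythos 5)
Your argument is correct in substance but takes a genuinely different route from the paper's. You restrict to the infinitesimal neighborhood $nC$: the sequence $0 \to \oo_X(\mu D_0 - nC) \to \oo_X(\mu D_0) \to \oo_X(\mu D_0)|_{nC} \to 0$ together with the $H^1$ hypothesis lets you lift the nowhere-vanishing ratio $\zeta_{l+\mu}/\zeta_l$ from $nC$ to a global section of $\oo_X(\mu D_0)$. The paper never forms the non-reduced scheme $nC$: writing $\xi$ for the equation of $C$ and $\zeta$ for the section giving $l\in HC_K$, it uses the Koszul-type sequence
\[
0 \to \oo_X(\mu D_0 - nC) \xrightarrow{(\zeta,\,-\xi^n)} \oo_X((l+\mu)D_0 - nC)\oplus\oo_X(\mu D_0) \xrightarrow{(\xi^n,\,\zeta)} \oo_X((l+\mu)D_0) \to 0,
\]
whose exactness is checked on the two open sets $X\setminus C$ and $X\setminus D$ (these cover $X$ precisely because $C\cap D=\emptyset$). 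The same $H^1$ vanishing then writes a section $\gamma$ giving $l+\mu\in HC_K$ as $\gamma=f\xi^n+g\zeta$, and $g\in H^0(\oo_X(\mu D_0))$ is the desired section: it cannot vanish along $C$ because $\xi$ does and $\gamma$ does not. Your $\tilde s$ plays exactly the role of $g$ (indeed $\tilde s\equiv \gamma/\zeta$ on $nC$), so the two proofs are two packagings of the same cohomological input.

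The one place where the paper's packaging buys something concrete: $X$ is only $\QQ$-factorial, and $C$ passes through singular torus-fixed points of $X$, so $C$ need not have a local equation there and the sheaves $\oo_X(lD_0)$ are reflexive rather than invertible. Your Step 1 phrase ``multiplying by a local equation of $C$'' and the identification of the cokernel with a line bundle on $nC$ therefore require care at those finitely many points, whereas the paper's sequence only needs $\oo_X(nC)$ trivialized on $X\setminus C$ and $\oo_X(lD_0)$ trivialized on $X\setminus D$, which the sections $\xi^n$ and $\zeta$ provide. Your argument survives this in any case: the conclusion only requires that $\tilde s$ not vanish identically along $C$ (since $C\cdot D_\mu=0$ and $C$ is irreducible, $C\not\subseteq D_\mu$ already forces $C\cap D_\mu=\emptyset$), and non-vanishing along $C$ can be read off at the generic point of $C$, where $X$ is smooth and your unit computation is unproblematic.
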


\begin{proof}
Let $\xi \in H^0(\oo_{X}(C))$ define $C$, and let $\zeta \in H^0(\oo_{X}(l D_0))$ define $D$ that gives $l\in HC_K$.   Since  $C\cap D = \emptyset$ we have a short exact sequence 
    \[
    \begin{tikzcd}[column sep=small]
      0\ar[r]& \oo_{X}(\mu D_0-nC)\ar{r}{(\zeta,-\xi^n)}&\oo_{X}((l+\mu) D_0-nC)\oplus\oo_{X}(\mu D_0)\ar{r}{\cdot (\xi^n,\zeta)}& \oo_{X}((l+\mu) D_0)\ar[r]&0.
    \end{tikzcd}
  \]
  Indeed, the exactness on the left and the middle are straightforward, and the exactness on the right is easily verified by restricting separately to the complement of $C$ and the complement of $D$.  

   By the assumption that $H^1(X, \oo_{X}(\mu D_0-nC))=0$ we have a surjective homomorphism 
\[
  H^0(\oo_{X}((l+\mu) D_0-nC))\oplus H^0(\oo_{X}(\mu D_0))\longrightarrow H^0(\oo_{X}((l+\mu) D_0)).
\]
Let $\gamma \in H^0(\oo_{X}((l+\mu) D_0))$ be a section giving $l+\mu\in HC_K$.
Then, $\gamma = f\xi^n + g\zeta$ for some $f\in H^0(\oo_{X}((l+\mu) D_0-nC))$ and some $g\in H^0(\oo_{X}(\mu D_0))$. We claim that $g$ does not vanish at any point of $C$, hence giving $\mu\in HC_K$. Let us check the equivalent condition that $g$ does not vanish along $C$. In the equation $\gamma = f\xi^n + g\zeta$ we know that $\xi$ vanishes along $C$ and $\gamma$ does not, hence $g$ does not vanish along $C$.
\end{proof}

In the two families that we consider the classes of $C$ and $D_0$ will have the form
\begin{equation} \label{eqn-CD} 
[C] = \pi^*H-mE, \qquad [D_0] = \pi^*H'-(im+1)E, \qquad i=1,2.
\end{equation}
Let us specialize to this situation.

\begin{corollary}    \label{HCCorollary}
Let the classes of $C$ and $D_0$ have the form (\ref{eqn-CD}). 
\begin{enumerate}[(a)]  \setlength\itemsep{1.5mm}
\item \label{HCCorollaryA} If $l,l+m\in HC_K$ for some $l,m>0$, then $m\in HC_K$.
\item \label{HCCorollaryB} If $l,l+m-1\in HC_K$ for some $l,m-1>0$, then $m-1\in HC_K$.
\item \label{HCCorollaryC} If $l,l+1\in HC_K$ for some $l>0$ and $H^0(\oo_{X_\Delta}(H'-iH))\neq 0$, then $1\in HC_K$.
\end{enumerate}

\end{corollary}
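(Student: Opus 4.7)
The plan is to deduce all three statements from Proposition~\ref{HCProposition} by making a suitable choice of $n$ in each case, and then verify the required $H^1$-vanishing via Corollary~\ref{h1corollary}. The key computation is that, for the classes $[C]=\pi^*H-mE$ and $[D_0]=\pi^*H'-(im+1)E$, we have
\[
\mu D_0-nC \;=\; \pi^*(\mu H'-nH)\;-\;\bigl(\mu(im+1)-nm\bigr)E,
\]
so by choosing $n$ appropriately we can force the coefficient of $E$ to land in a regime where Corollary~\ref{h1corollary} guarantees vanishing of $H^1$.

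For part \eqref{HCCorollaryA}, I take $\mu=m$ and $n=im+1$; then the $E$-coefficient becomes $m(im+1)-(im+1)m=0$, so $H^1(X,\oo_X(mD_0-nC))=0$ by Corollary~\ref{h1corollary}\,(a), and Proposition~\ref{HCProposition} gives $m\in HC_K$. For part \eqref{HCCorollaryB}, I take $\mu=m-1$ and $n=im-i+1$ (i.e.\ $n=m$ if $i=1$, and $n=2m-1$ if $i=2$). A short computation shows $(m-1)(im+1)-nm=-1$, so $(m-1)D_0-nC=\pi^*((m-1)H'-nH)+E$, and Corollary~\ref{h1corollary}\,(a) again gives $H^1=0$; Proposition~\ref{HCProposition} then yields $m-1\in HC_K$. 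Note $n>0$ in all cases since $m\ge 2$ in part \eqref{HCCorollaryB}.

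For part \eqref{HCCorollaryC}, I take $\mu=1$ and $n=i$; the $E$-coefficient is then $(im+1)-im=1$, so
\[
D_0-iC \;=\; \pi^*(H'-iH)-E.
\]
Here the relevant vanishing does not come for free: by Corollary~\ref{h1corollary}\,(c) the group $H^1(X,\oo_X(D_0-iC))$ vanishes if and only if $H^0(\oo_{X_\Delta}(H'-iH))\neq 0$, which is exactly the extra hypothesis of \eqref{HCCorollaryC}. With this in hand, Proposition~\ref{HCProposition} gives $1\in HC_K$.

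The main (minor) obstacle is simply bookkeeping: verifying in each case that the chosen $n$ is a positive integer and that the $E$-coefficient of $\mu D_0-nC$ falls exactly into one of the three regimes $\{0,-1,+1\}$ covered by Corollary~\ref{h1corollary}. Part \eqref{HCCorollaryC} is genuinely special in that the coefficient is forced to be $+1$ (no choice of $n>0$ produces $0$ or $-1$ when $\mu=1$), which is precisely why one needs the extra hypothesis $H^0(\oo_{X_\Delta}(H'-iH))\neq 0$ to invoke Corollary~\ref{h1corollary}\,(c).
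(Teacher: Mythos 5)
Your proof is correct and follows exactly the paper's own argument: the same choices of $\mu$ and $n$ in all three parts ($\mu=m,\ n=im+1$; $\mu=m-1,\ n=i(m-1)+1$; $\mu=1,\ n=i$), reducing each case to Proposition~\ref{HCProposition} via the corresponding part of Corollary~\ref{h1corollary}. The additional observation about why part (c) genuinely requires the extra $H^0$ hypothesis is a nice touch but does not change the argument.
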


\begin{proof} 
The claims follow from Proposition~\ref{HCProposition} by choosing appropriate $\mu$ and $n$.

(\ref{HCCorollaryA}) If we let $n=im+1$ and $\mu=m$, then $\mu D_0-nC=\pi^*(\mu H'-nH)$. By Corollary~\ref{h1corollary}\,(a), $H^1(\oo_{X}(\mu D_0-nC))=0$.

(\ref{HCCorollaryB}) If we let $n=i(m-1)+1$ and $\mu=m-1$, then $\mu D_0-nC=\pi^*(\mu H'-nH)+E$.  By Corollary~\ref{h1corollary}\,(a), $H^1(\oo_{X}(\mu D_0-nC))=0$.

(\ref{HCCorollaryC}) If we let $n=i$ and $\mu=1$, then $\mu D_0-nC=\pi^*(\mu H'-nH)-E$.  By Corollary~\ref{h1corollary}\,(c), $H^1(\oo_{X}(\mu D_0-nC))=0$.
\end{proof}

\begin{proposition}    \label{proposition.m.m-1.enough}
Let the classes of $C$ and $D_0$ have the form (\ref{eqn-CD}). 
 Assume that for all $p\gg 0$ there exists $n_p \in \mathbb{Z}_{\geq 0}$ such that $p^{n_p}\in HC_{\fp}$. Then, $HC_k$ is not empty if and only if $m-1\in HC_k$ or, equivalently, $m\in HC_k$. 
\end{proposition}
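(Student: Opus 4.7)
The ``if'' direction is immediate from the definition of $HC_k$, so the real content is the ``only if'' direction: assuming $HC_k \neq \emptyset$, I want to deduce that both $m-1$ and $m$ lie in $HC_k$.

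The strategy is to pass to positive characteristic via Lemma~\ref{lemma.pos.0.char}, exploit the hypothesis to enlarge $HC_{\overline{\FF}_p}$ substantially, and then pull $m$ and $m-1$ out using Corollary~\ref{HCCorollary}. Concretely: pick any $l_0 \in HC_k$. Then by Lemma~\ref{lemma.pos.0.char}, $l_0 \in HC_{\overline{\FF}_p}$ for all $p \gg 0$. By hypothesis, $p^{n_p} \in HC_{\overline{\FF}_p}$ for all $p \gg 0$ as well. Restricting to primes $p > l_0$ we have $\gcd(l_0, p^{n_p}) = 1$, so the sub-semigroup of $\ZZ_{>0}$ generated by $l_0$ and $p^{n_p}$ contains every sufficiently large integer (Sylvester--Frobenius). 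Since $HC_{\overline{\FF}_p}$ is closed under addition and contains both $l_0$ and $p^{n_p}$, it contains every integer greater than some threshold $N_p$.

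Now choose any integer $l > N_p + m$. Then $l$, $l + (m-1)$, and $l + m$ all lie in $HC_{\overline{\FF}_p}$. Applying Corollary~\ref{HCCorollary}\,(\ref{HCCorollaryA}) to the pair $(l, l+m)$ gives $m \in HC_{\overline{\FF}_p}$, and applying Corollary~\ref{HCCorollary}\,(\ref{HCCorollaryB}) to the pair $(l, l+m-1)$ gives $m-1 \in HC_{\overline{\FF}_p}$ (for $m \geq 2$). Since this holds for all $p \gg 0$, a second application of Lemma~\ref{lemma.pos.0.char} transports both conclusions back to characteristic zero: $m, m-1 \in HC_k$.

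The main conceptual step is the observation that the hypothesis $p^{n_p} \in HC_{\overline{\FF}_p}$ combined with some fixed element $l_0 \in HC_k$ forces the mod-$p$ semigroup to have gcd one, so it eventually fills up $\ZZ_{>0}$; after that, the machinery of Corollary~\ref{HCCorollary} does the work. The only minor subtlety is the edge case $m=1$: here the statement ``$m-1 \in HC_k$'' is vacuous and one uses Corollary~\ref{HCCorollary}\,(\ref{HCCorollaryC}) in place of (\ref{HCCorollaryB}), provided the extra cohomological hypothesis $H^0(\oo_{X_\Delta}(H' - iH)) \neq 0$ holds (which in the setting of (\ref{eqn-CD}) is automatic when $H'$ is sufficiently positive).
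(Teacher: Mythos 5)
Your proof is correct and follows essentially the same route as the paper: pass to $HC_{\overline{\FF}_p}$ via Lemma~\ref{lemma.pos.0.char}, use $\gcd(l_0,p^{n_p})=1$ for $p\gg 0$ to see that the sub-semigroup eventually contains all large integers, extract $m$ and $m-1$ with Corollary~\ref{HCCorollary}\,(a) and (b), and transport back. Your explicit handling of the $m=1$ edge case is slightly more careful than the paper's, but the argument is the same.
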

\begin{proof}
% %This follows by the same argument in  the proof of \cite[Proposition 5.3]{GGK}, but now using Corollary~\ref{HCCorollary} instead of \cite[Lemma 5.2]{GGK}. 
By Lemma~\ref{lemma.pos.0.char}, a fixed $l$ lies in $HC_k$ if and only if $l$ lies in  $HC_{\overline{\FF}_p}$ for all $p \gg 0$.
Suppose that $HC_k$ is not empty and fix $l_0\in HC_k$. 
Then, $l_0\in HC_{\overline\FF_p}$ for $p \gg 0$. 
Since $HC_{\overline\FF_p}$ is a subsemigroup of $\mathbb{Z}_{> 0}$, there exist $l_p, N_p \in \mathbb{Z}_{>0}$ such that $HC_{\overline\FF_p} \subseteq l_p \ZZ$ and $HC_{\overline\FF_p} \cap \mathbb{Z}_{>N_p} = l_p \ZZ \cap \mathbb{Z}_{>N_p}$.
Since $l_0, p^{n_p} \in HC_{\overline\FF_p}$, we deduce that $l_p=1$ for all $p \gg 0$.
Then, by Corollary~\ref{HCCorollary} a) and b), for all $p \gg 0$ we get $m \in HC_{\overline\FF_p}$ and $m-1 \in HC_{\overline\FF_p}$.
Hence, we get $m-1, m \in HC_k$
\end{proof}

%%%%%%%%%%%%%%%%%%%%%%%%%%%%%%%%%%%%%%%%%%%%%%%%%%%%%%%%%%%%%%%%%%%%%%%%%%%%%%%%%%%%
%%%%%%%%%%%%%%%%%%%%%%%%%%%%%%%%%%%%%%%%%%%%%%%%%%%%%%%%%%%%%%%%%%%%%%%%%%%%%%%%%%%%
%%%%                                                                            %%%%
%%%%                    Section 5. Proof of the main theorem                    %%%%
%%%%                                                                            %%%%
%%%%%%%%%%%%%%%%%%%%%%%%%%%%%%%%%%%%%%%%%%%%%%%%%%%%%%%%%%%%%%%%%%%%%%%%%%%%%%%%%%%%
%%%%%%%%%%%%%%%%%%%%%%%%%%%%%%%%%%%%%%%%%%%%%%%%%%%%%%%%%%%%%%%%%%%%%%%%%%%%%%%%%%%%

\section{Proof of the main theorem}

In this section we prove Theorem~\ref{main-thm} using Proposition~\ref{proposition.m.m-1.enough}.

Let $\Delta = \Delta_i(m)$ be a triangle in one of the two families and $X= \Bl_{t_0} X_\Delta$. The polynomial $\xi_m$ as in Section~\ref{section.negative.curves} defines the negative curve $C$ in $X$ having class
\[ [C] = \pi^*H-mE,\]
where $H$ corresponds to the triangle $\Delta$. Let us choose the class $[D_0]$ of the form 
\[ [D_0] = \pi^*H'-(im+1)E,\]
where $H'$ corresponds to a triangle $\Delta'$ with sides parallel to the sides of $\Delta$. Such a triangle $\Delta'$ is determined by two of its vertices:
  \begin{itemize}
  \item For $i=1$, let $\Delta'$ have two vertices $(m,1)$ and $(0,m+1)$.
  \item For $i=2$, let $\Delta'$ have two vertices $(m,1)$ and $(0,2m+1)$.
  \end{itemize}
Figure~\ref{figure.D} shows the triangles $\Delta'$ for $m=4$.

\begin{lemma}
$C \cdot D_0 = 0. $
\end{lemma}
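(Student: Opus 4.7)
The plan is to expand the intersection product directly on $X$. Writing $C = \pi^*H - mE$ and $D_0 = \pi^*H' - (im+1)E$ and using $\pi^*H \cdot E = \pi^*H' \cdot E = 0$, $E^2 = -1$, and the projection formula $\pi^*H \cdot \pi^*H' = H \cdot H'$, one gets
\[
C\cdot D_0 \;=\; H\cdot H' \;-\; m(im+1),
\]
so the task reduces to proving $H\cdot H' = m(im+1)$ on $X_\Delta$.

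To compute $H\cdot H'$ I would exploit the fact that $X_\Delta$, being a toric surface defined by a triangle, has Picard number one. Hence any two ample $\QQ$-divisor classes are proportional, and at the level of the defining polytopes (which share the normal fan of $\Delta$) this forces a unique $\lambda>0$ and translation vector $v\in\RR^2$ with $\Delta' = \lambda\Delta + v$. Consequently $H' \equiv \lambda H$ numerically, and
\[
H\cdot H' \;=\; \lambda\, H^2 \;=\; 2\lambda\,|\Delta|,
\]
where $|\Delta|$ denotes the Euclidean area.

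It remains to determine $\lambda$ and $|\Delta|$ in each of the two families. For $i=1$, matching the prescribed vertices $(m,1)$ and $(0,m+1)$ of $\Delta'$ with the vertices $B=(m-1+\alpha,-\alpha)$ and $C=(-\beta,m-1+\beta)$ of $\Delta$ and subtracting gives $\lambda = m/(m-1+\alpha+\beta)$; a straightforward determinant computation yields $|\Delta| = \tfrac{1}{2}(m+1)(m-1+\alpha+\beta)$, so $H\cdot H' = m(m+1)$. The analogous matching for $i=2$, using the slope-$(-2)$ edge of $\Delta$ and the vertices $(m,1),(0,2m+1)$, produces $\lambda = 4m/(2m-1+4(\alpha+\beta))$ and $|\Delta| = \tfrac{1}{8}(2m+1)(2m-1+4(\alpha+\beta))$, hence $H\cdot H' = m(2m+1)$. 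In both families the identity $H\cdot H' = m(im+1)$ holds, completing the proof.

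The only nontrivial step is the reduction $\Delta' = \lambda\Delta + v$, which is a direct consequence of Picard number one together with $\Delta'$ being prescribed to have edges parallel to those of $\Delta$; after that, the claim follows from a routine area and edge-length computation and I do not foresee any obstruction.
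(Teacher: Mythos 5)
Your proposal is correct and follows essentially the same route as the paper: both reduce to $C\cdot D_0 = H\cdot H' - m(im+1)$, use that $\Delta'$ is a dilate-plus-translate of $\Delta$ (so $H'\equiv\lambda H$), read off $\lambda$ from the parallel right edges, and finish with the explicit area of $\Delta$. Your computed values of $\lambda$ and $|\Delta|$ agree with the paper's $\lambda_i$ and $A_i$ in both families.
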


\begin{proof} 
The area $A$ of the triangle $\Delta$ in the two families is
\[ A_1 = \frac{1}{2}(m+1)(m-1+\alpha+\beta), \qquad A_2 = \frac{1}{2} (2m+1)\left(\frac{2m-1}{4}+\alpha+\beta\right).\]
We get the triangle $\Delta'$ by multiplying  $\Delta$ with a constant $\lambda$ and translating the result. Considering the right edges of the triangles, the constant $\lambda$ can be found to be
\[  \lambda_1=\frac{m}{m-1+\alpha+\beta}, \qquad\lambda_2=\frac{m}{\frac{2m-1}{4}+\alpha+\beta}. \]
Now we can compute the intersection number:
\[ C \cdot D_0 = H\cdot H' -m(im+1) = \lambda H\cdot H - m(im+1) = 2\lambda A - m(im+1) = 0.\]
\end{proof}

    \begin{figure}[H]
    \centering 
    \begin{subfigure}{0.4\linewidth}
      \begin{tikzpicture}[
    scale=1,
    axis/.style={ ->, >=stealth'},
    important line/.style={very thick},
    ]

    % x-axis
    \draw[axis] (-1.4,-1) -- (5,-1) node(xline)[right] {};    
    %y-axis
    \draw[axis] (0,-1.4) -- (0,5) node(yline)[above] {};
    %Nodes and tags
    \foreach \Point in {
      (0,0),(1,0),(2,0),(3,0),(4,0),
      (0,1),(1,1),(2,1),(3,1),
      (0,2),(1,2),(2,2),
      (0,3),(1,3),
      (0,4),
    }
    \draw[fill=black] \Point circle (0.06);
    \foreach \Point in {
      (0,-1),(-1,0),(-1,-1)
    }
    \draw \Point circle (0.08);
    \coordinate (A) at (-4/3,-4/3);
    \coordinate (B) at (4,0);
    \coordinate (C) at (0,4);

    \coordinate (AA) at (-1888/1827+0.09, -11132/9135+0.19);

    %\draw (A) circle (0) node[below left] {$-\alpha$};
    %\draw (B) circle (0) node[above right] {$(m,m+1)$};
    %\draw (3,0) circle (0) node[below right] {$m-1+\beta$};

    %triangle
    \draw [dashed] (A) -- (B) -- (C) -- cycle;
    \draw (AA) -- (B) -- (C) -- cycle;

    % coordinates
    \node[rectangle, rounded corners,right] at (B) {\tiny $(m,1)$};
    \node[rectangle, rounded corners,above left] at (C) {\tiny $(0,m+1)$};
\end{tikzpicture}
      \caption{$\Delta'$ in the first family.}  
    \end{subfigure}
    \qquad
    \begin{subfigure}{0.4\linewidth}
      \begin{tikzpicture}[
    scale=0.6,
    axis/.style={ ->, >=stealth'},
    important line/.style={very thick},
    ]

    % x-axis
    \draw[axis] (-6,-1) -- (5,-1) node(xline)[right] {};    
    %y-axis
    \draw[axis] (0,-3) -- (0,9) node(yline)[above] {};
    %Nodes and tags
    \foreach \Point in {
      (-3,0),(-2,0),(-1,0),(0,0),(1,0),(2,0),(3,0),(4,0),
      (-3,1),(-2,1),(-1,1),(0,1),(1,1),(2,1),(3,1),
      (-2,2),(-1,2),(0,2),(1,2),(2,2),(3,2),
      (-2,3),(-1,3),(0,3),(1,3),(2,3),
      (-1,4),(0,4),(1,4),(2,4),
      (-1,5),(0,5),(1,5),
      (0,6),(1,6),
      (0,7),
      (0,8),
      (-4,-1),(-3,-1),(-2,-1),(-1,-1)
    }
    \draw[fill=black] \Point circle (0.06);
        %Nodes and tags
    \foreach \Point in {
      (0,-1),
      (-1,6),
      (-2,4),
      (-3,2),
      (-4,0),(-4,-2),
      (-5,-2)
    }
    \draw \Point circle (0.08);

    \coordinate (A) at (-36/7,-16/7);
    \coordinate (B) at (4,0);
    \coordinate (C) at (0,8);

    \coordinate (AA) at (-5.07283+0.5, -2.1602+0.28);

    %\draw (A) circle (0) node[below left] {$-\alpha$};
    %\draw (B) circle (0) node[above right] {$(m,m+1)$};
    %\draw (3,0) circle (0) node[below right] {$m-1+\beta$};

    %triangle
    \draw [dashed] (A) -- (B) -- (C) -- cycle;
    \draw (AA) -- (B) -- (C) -- cycle;
    %coordinates
    \node[rectangle, rounded corners,right] at (B) {\tiny $(m,1)$};
    \node[rectangle, rounded corners,above right] at (C) {\tiny $(0,2m+1)$};
    
\end{tikzpicture}
      \caption{$\Delta'$ in the second family.}  
    \end{subfigure}
    \caption{Triangles $\Delta'$ for $m=4$. The dashed triangle corresponds to the values $\alpha=\beta=0$. The solid triangles correspond to $\alpha$ and $\beta$ nonzero.}
    \label{figure.D}
  \end{figure}
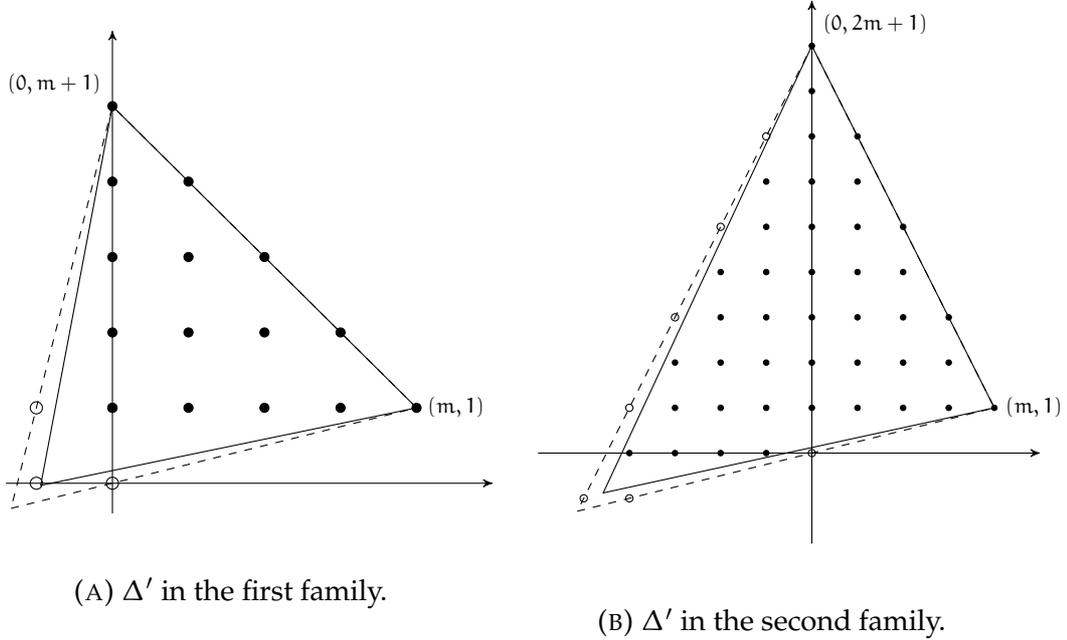  

\begin{lemma}\label{lemma.char.p}
For every prime $p$ there exists an $n_p \geq 0$ such that $p^{n_p}\in HC_{\fp}$.
\end{lemma}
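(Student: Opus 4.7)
The plan is to combine the Mori Dream Space property of $X_K$ in positive characteristic with Frobenius pullback, reducing the lemma to showing that the restricted line bundle $L=\cO_X(D_0)|_{C_K}$ has order a power of $p$ in $\Pic^0(C_K)$.

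First, by Artin's contractibility theorem together with Cutkosky's criterion recalled before Theorem~\ref{thm-cut}, the existence of the negative curve $C_K$ already forces $X_K$ to be a Mori Dream Space in positive characteristic, so $HC_{\fp}$ is nonempty. Equivalently, $D_0$ is big and nef (one computes $D_0^2=(im+1)^2(-C^2)/H^2>0$ from $C\cdot D_0=0$ and $C^2<0$), and its restriction $L$ to the exceptional locus $E(D_0)=C_K$ is a degree-zero line bundle on a projective curve over $\fp$, hence a torsion element of $\Pic^0(C_K)$; Keel's semiampleness theorem then implies that $D_0$ is semiample, and in particular some positive multiple of $D_0$ is basepoint-free.

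Next observe that the $p$-th power map $f\mapsto f^p$ sends a section of $\cO_X(lD_0)$ with zero locus disjoint from $C_K$ to a section of $\cO_X(plD_0)$ with the same set-theoretic zero locus, so $HC_{\fp}$ is closed under multiplication by $p$. Combined with the short exact sequence
\[
0\to\cO_X(p^{n_p}D_0-C)\to\cO_X(p^{n_p}D_0)\to L^{\otimes p^{n_p}}\to 0
\]
and the vanishing of $H^1(X,\cO_X(p^{n_p}D_0-C))$ provided by Corollary~\ref{h1corollary} (applied for suitable $n_p$), this reduces the lemma to producing some $n_p$ with $L^{\otimes p^{n_p}}\isom\cO_{C_K}$; in other words, to showing that the order of $L$ in $\Pic^0(C_K)$ is a power of $p$.

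This last claim is the main obstacle. Pulling back via the normalization $\nu:\tilde C_K\to C_K$, the kernel of $\Pic^0(C_K)\to\Pic^0(\tilde C_K)$ is an affine commutative group scheme whose unipotent part is killed by a power of $p$ over $\fp$, while any toric (nodal) contribution would carry prime-to-$p$ torsion. The hard part is thus a geometric analysis of $C_K$ via the Newton polytope combinatorics of $\Delta_i^0(m)$ and the explicit recurrences for $\xi_m$ in Remark 2.3, both to rule out obstructive nodal singularities on $C_K$ and to verify that $\nu^*L$ is trivial on $\tilde C_K$. Once these are verified, lifting a nowhere-vanishing section of $\cO_{C_K}$ through the exact sequence above produces a section of $\cO_X(p^{n_p}D_0)$ disjoint from $C_K$, giving $p^{n_p}\in HC_{\fp}$ as required.
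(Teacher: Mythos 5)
Your reduction is sound up to a point: if one can find $n$ with a nonzero section of $\cO_X(p^{n}D_0)|_{C_K}$, then (since this restriction has degree $C_K\cdot D_0=0$ on an irreducible projective curve) that restriction is trivial, and lifting a nowhere-vanishing section through your exact sequence would indeed produce $p^{n}\in HC_{\fp}$, modulo the $H^1$-vanishing, which you should justify (Corollary~\ref{h1corollary}(b) does not hand it to you --- it converts it into exactly the kind of surjectivity statement that has to be proved; you would instead want to argue that $p^nD_0-C$ is ample for $n\gg0$ and invoke an appropriate vanishing). The genuine gap is that you have reduced the lemma to the claim that $L=\cO_X(D_0)|_{C_K}$ has $p$-power order in $\Pic(C_K)$, and then you do not prove that claim: the final paragraph only names it as ``the hard part'' and says ``once these are verified.'' General structure theory cannot close this gap. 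Over $\fp$ every point of $\Pic^0(C_K)$ is torsion (this is the Artin--Cutkosky input and only gives \emph{some} $l\in HC_{\fp}$, which is already recorded after Theorem~\ref{thm-cut}), but the abelian and multiplicative parts of $\Pic^0(C_K)$ carry plenty of prime-to-$p$ torsion, so there is no a priori reason $L$ lands in the unipotent part; ruling this out would require controlling the geometric genus and the singularities of $C_K$ for every $p$, and no such argument is given. As written, the proposal restates the lemma in a different language rather than proving it.

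The paper's proof avoids this entirely and is an explicit construction. Since $(1-y)^{(im+1)p^{l}}=(1-y^{p^{l}})^{im+1}$ in characteristic $p$, this section is supported on the $y$-axis at heights $0,p^{l},\dots,(im+1)p^{l}$, hence lies in $p^{l}\Delta'$ except for its constant term. One then corrects the constant term by a polynomial $F=(xy\xi_m)^{p^{l}}(1+g)$ vanishing to order $p^{l}(im+1)$ at $t_0$; finding $g$ amounts to the surjectivity of $H^0(\oo_{X_\Delta}(p^{l}H''))\to H^0(\oo_{X_\Delta}(p^{l}H'')/I_{t_0}^{p^{l}((i-1)m+1)})$, which follows from Kleiman ampleness of $H''-((i-1)m+1)E$ on $X$ and vanishing of $H^1$ for $l\gg0$. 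The sum $\zeta=(1-y^{p^{l}})^{im+1}+F$ then has the top vertex of $p^{l}\Delta'$ in its support, so the divisor it defines misses the corresponding $T$-fixed point of $C$, giving $p^{l}\in HC_{\fp}$ directly. If you want to salvage your approach, note that the paper's construction shows a posteriori that $L^{\otimes p^{l}}$ is trivial for $l\gg0$; but proving that triviality \emph{first}, as your plan requires, is not easier than the lemma itself.
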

\begin{proof} We show the existence of polynomials $\zeta = \zeta_i$ yielding $p^{l}\in HC_{\fp}$ for all $l\gg 0$.

  Consider the polynomial $(1-y)^{im+1}\in\fp[x,y]$, with Newton polytope containing all the integral points $(0,0),\dots,(0,im+1)$ and vanishing to order $im+1$ at $t_0$.      
  If $\alpha=0$, the polynomial $(1-y)^{mi+1}$ is supported in $\Delta'$ and yields $1\in HC_{\fp}$, and hence $p^l\in HC_{\fp}$ for all $l\geq n_p$.
  Hence, we can now assume $\alpha >0$ for both families and additionally $\beta > 0$ for the first family, by symmetry.
  Thus, with exception of its constant term, the polynomial $((1-y)^{im+1})^{p^l}$ is supported in the triangle $p^l\Delta'$.

  To show the result we will prove that for $l\gg 0$ there exists another polynomial $F$ such that it vanishes to order $p^l(im+1)$ at $t_0$ and is supported in $p^l\Delta'$, except for its constant term which has a nonzero coefficient.  Then, multiplying $F$ by a scalar if necessary, $\zeta=((1-y)^{im+1})^{p^l}+F$ is supported in $p^l\Delta'$ and vanishes to order $p^l(im+1)$ at $t_0$. Further, we construct the polynomial $F$ so that its Newton polytope does not include the top vertex of $p^l\Delta'$. Since the top vertex lies in the Newton polytope of $((1-y)^{im+1})^{p^l}$, it follows that $\zeta$ does not vanish at the $T$-fixed point corresponding to the top vertex. Since $C$ passes through that point, then $\zeta$ cannot vanish on $C$ and hence defines $D$ such that $C\cap D =\emptyset$.

Consider the triangle $\Delta''$ as shown in Figure~\ref{figure.D-subtriangles}. Its right edge lies on the line through the origin. This triangle corresponds to the class of a  Weil divisor $H''$ in $X_\Delta$. 
Since $\Delta'$ is (a translated copy of) the Minkowski sum $\Delta+\Delta''$, we get $H'=H+H''$ in $N_1(X_\Delta)$.

  \begin{figure}
    \centering 
    \begin{subfigure}{0.4\linewidth}
      \begin{tikzpicture}[
    scale=1,
    axis/.style={ ->, >=stealth'},
    important line/.style={very thick},
    ]

    % x-axis
    \draw[axis] (-1.4,-1) -- (5,-1) node(xline)[right] {};    
    %y-axis
    \draw[axis] (0,-1.4) -- (0,5) node(yline)[above] {};
    %Nodes and tags
    \foreach \Point in {
      (1,1),(2,1),(3,1),
      (1,2),(2,2),
      (1,3),
      (0,-1),(1,0),(2,0),(3,0),(4,0)
    }
    \draw[fill=white] \Point circle (0.06);
    \foreach \Point in {
      (1,1),(2,1),(3,1),
      (1,2),(2,2),
      (1,3),
      (0,-1),(1,0),(2,0),(3,0),(4,0)
    }
    \draw[fill=black,opacity = 1] \Point circle (0.06);

    \coordinate (A) at (-4/3,-4/3);
    \coordinate (B) at (4,0);
    \coordinate (C) at (0,4);

    \coordinate (AA) at (-1888/1827+0.09, -11132/9135+0.19);

    %\draw (A) circle (0) node[below left] {$-\alpha$};
    %\draw (B) circle (0) node[above right] {$(m,m+1)$};
    %\draw (3,0) circle (0) node[below right] {$m-1+\beta$};

    %triangle
    \draw [dashed] (A) -- (B) -- (C) -- cycle;
    \draw [name path=AA--B] (AA) -- (B);
    \draw [name path=AA--C] (AA) -- (C);
    \draw (B) -- (C);
    \draw [draw = none, name path=invisible] (-1,0) -- (0,-1);
    \path [name intersections={of=AA--B and invisible,by=TD}];
    \path [name intersections={of=AA--C and invisible,by=TU}];
    \draw (TU) -- (TD) -- (AA) -- cycle;

    \node at (-4/5+0.17,-4/5+0.07) {$\Delta''$};
    %\draw[white, fill = white] (0.2,1.5) circle (0.6);
    \node at (0.8,3.9) {$\Delta'$};
    \node at (2.4,2.4) {$\Delta$};

    %Draws parallel lines to bottom and top edge of \Delta_D:
    \def\shiftedTop#1{
      \draw[draw = none, name path = inv-shifted-top] ($ #1 $) -- ($ (C) + #1 -(AA) $);
    };

    \def\shiftedBottom#1{
      \draw[draw = none, name path = inv-shifted-bottom] ($ #1 $) -- ($ (B) + #1 -(AA) $);
    };
    %\shiftedTop{(0,-1)};
    %\shiftedBottom{(0,-1)};
    %Routine that places triangle H in position #1
    \def\shiftedTri#1{
      \shiftedTop{#1};
      \shiftedBottom{#1};
      \coordinate (temp) at #1;
      \draw [draw = none, name path=B--C] ((5,-1) -- (C);
      \path [name intersections={of=B--C and inv-shifted-top,by=BShift}];
      \path [name intersections={of=B--C and inv-shifted-bottom,by=CShift}];
      \draw (temp.center) -- (BShift) -- (CShift) -- (temp.center);
    };

    \shiftedTri{(TD)}
    % coordinates
    \node[rectangle, rounded corners,right] at (B) {\tiny $(m,1)$};
    \node[rectangle, rounded corners,above left] at (C) {\tiny $(0,m+1)$};
      
\end{tikzpicture}
%%% Local Variables:
%%% mode: latex
%%% TeX-master: "main"
%%% End:
      \caption{First family.}  
    \end{subfigure}
    \qquad
    \begin{subfigure}{0.4\linewidth}
      \begin{tikzpicture}[
    scale=0.6,
    axis/.style={ ->, >=stealth'},
    important line/.style={very thick},
    ]

    % x-axis
    \draw[axis, opacity=0.5] (-6,-1) -- (5,-1) node(xline)[right] {};    
    %y-axis
    \draw[axis, opacity = 0.5] (0,-3) -- (0,9) node(yline)[above] {};
    %Nodes and tags
    \foreach \Point in {
      (1,0),(2,0),(3,0),(4,0),
      (1,1),(2,1),(3,1),
      (2,2),(3,2),
      (2,3),
      (0,-1)
    }
    \draw[fill=black] \Point circle (0.06);

    \coordinate (A) at (-36/7,-16/7);
    \coordinate (B) at (4,0);
    \coordinate (C) at (0,8);

    \coordinate (AA) at (-5.07283+0.5, -2.1602+0.28);

    %\draw (A) circle (0) node[below left] {$-\alpha$};
    %\draw (B) circle (0) node[above right] {$(m,m+1)$};
    %\draw (3,0) circle (0) node[below right] {$m-1+\beta$};

    %triangle
    \draw [dashed](A) -- (B) -- (C) -- cycle;
    \draw [name path=AA--B] (AA) -- (B);
    \draw [name path=AA--C] (AA) -- (C);
    \draw (B) -- (C);
    \draw [draw = none, name path=invisible] (-3,5) -- (0,-1);
    \path [name intersections={of=AA--B and invisible,by=TD}];
    \path [name intersections={of=AA--C and invisible,by=TU}];
    \draw [fill = white](TU) -- (TD) -- (AA) -- cycle;

    \node at (-2.2,0.3) {$\Delta''$};
    %\draw[white, fill = white] (0,4) circle (0.6);
    \node at (2,5.4) {$\Delta'$};

       %Draws parallel lines to bottom and top edge of \Delta_D:
    \def\shiftedTop#1{
      \draw[draw = none, name path = inv-shifted-top] ($ #1 $) -- ($ (C) + #1 -(AA) $);
    };

    \def\shiftedBottom#1{
      \draw[draw = none, name path = inv-shifted-bottom] ($ #1 $) -- ($ (B) + #1 -(AA) $);
    };
    %\shiftedTop{(0,-1)};
    %\shiftedBottom{(0,-1)};
    %Routine that places triangle H in position #1
    \def\shiftedTri#1{
      \shiftedTop{#1};
      \shiftedBottom{#1};
      \coordinate (temp) at #1;
      \draw [draw = none, name path=B--C] ((4.5,-1) -- (C);
      \path [name intersections={of=B--C and inv-shifted-top,by=BShift}];
      \path [name intersections={of=B--C and inv-shifted-bottom,by=CShift}];
      \draw (temp.center) -- (BShift) -- (CShift) -- (temp.center);
    };

    \shiftedTri{(TD)}
        \draw[white, fill=white] (barycentric cs:TD=1,BShift=1,CShift=1) circle (0.3) node {};
    \node at (barycentric cs:TD=1,BShift=1,CShift=1) {$\Delta$};
    
    %coordinates
        \node[rectangle, rounded corners,right] at (B) {\tiny $(m,1)$};
    \node[rectangle, rounded corners,above right] at (C) {\tiny $(0,2m+1)$};
\end{tikzpicture}
      \caption{Second family.}  
    \end{subfigure}
    \caption{The triangles $\Delta$, $\Delta'$ and $\Delta''$ for $m=4$. The triangle labelled $\Delta$ is a translate of the actual $\Delta$. The lattice points shown are the monomials in the support of $xy \xi_m$.}
    \label{figure.D-subtriangles}
  \end{figure}

We look for the polynomial $F$  in the form $F= (xy\xi_m)^{p^l}(1+g)$, with $g$ supported in the triangle $p^l \Delta''$. Such $F$ is supported in $\Delta'$, except its nonzero constant term. The polynomial $F$ vanishes to order $p^l(im+1)$ at $t_0$ if $1+g$ vanishes to order $p^l((i-1)m+1)$ at $t_0$. In other words, we are looking for a polynomial $g$ whose restriction to the $p^l((i-1)m+1)$-st order infinitesimal neighborhood of $t_0$ coincides with the function $-1$ on that neighborhood. The existence of such a $g$ follows if we can prove more generally that for any function on the infinitesimal neighborhood there exists a $g$ whose restriction to the neighborhood agrees with the given function. Thus, we want surjectivity of the morphism
\[ H^0(\oo_{X_\Delta}(p^lH''))\to  H^0( \oo_{X_\Delta}(p^lH'') / I_{t_0}^{p^l((i-1)m+1)} ).\]
Let us denote the right hand space by $H^0(\oo_{p^l((i-1)m+1)t_0})$. Then the morphism fits into the long exact sequence
       \[
    \begin{tikzcd}[column sep=small]
      0\ar[r]
      &H^0(\oo_{X_\Delta}(p^lH'')\otimes I_{t_0}^{p^l((i-1)m+1)}) \arrow[r]
      &H^0(\oo_{X_\Delta}(p^lH''))\arrow[r]
      \arrow[d, phantom, ""{coordinate, name=Z}]
      & H^0(\oo_{p^l((i-1)m+1)t_0}) \arrow[dll,
      rounded corners,
      to path={ -- ([xshift=2ex]\tikztostart.east)
        |- (Z) [near end]\tikztonodes
        -| ([xshift=-2ex]\tikztotarget.west)
        -- (\tikztotarget)}] \\
      & H^1(\oo_{X_\Delta}(p^lH'')\otimes I_{t_0}^{p^l((i-1)m+1)}) \arrow[r]
      & 0.
      &
    \end{tikzcd}
  \]
  To guarantee the existence of $g$ it is enough to show that $H^1(\oo_{X_\Delta}(p^lH'')\otimes I_{t_0}^{p^l((i-1)m+1)})=0$ for $l\gg 0$. By Proposition \ref{proposition.direct.images} this translates to showing that $H^1(\oo_X(p^l(H''-nE)))=0$, where $n=(i-1)m+1$. We claim that $H''-nE$ is ample on $X$. Indeed, from $[D_0] = [C] + H''-nE$ we get
  \begin{align*}
    (H''-nE)\cdot C&=(D_0-C)\cdot C=  - C\cdot C > 0,\\
    (H''-nE)\cdot E&= n>0.
  \end{align*}
 Thus, by Kleiman's criterion $H''-nE$ is ample. It follows that its higher cohomology groups vanish for big enough multiples, in particular $H^1(\oo_X(p^l(H''-nE)))=0$ for all $l\gg 0$.
\end{proof}

We will now prove the main theorem. We start by dealing with the values of $\alpha$ and $\beta$ for which $X$ is claimed to be a MDS.

When $\alpha=0$ then the polynomial $\zeta=(1-y)^{mi+1}$ is supported in $\Delta'$ and yields $1\in HC_k$. Indeed, the polynomial $\zeta$ defines a curve $D$ in $X$ that lies in class $[D_0]$, and clearly $C$ is not a component of $D$, hence $C$ and $D$ are disjoint. 

The case $\beta=0$ for the first family follows from the case $\alpha=0$ by symmetry.

In the remaining cases claimed to be MDS, we prove $1\in HC_k$ by constructing a polynomial  $\zeta$ supported in $\Delta'$ and having the top vertex of $\Delta'$ in its support. In this way, the curve $D$ does not pass through the corresponding $T$-fixed point, while the curve $C$ does pass through the same point. This implies that $C$ is not a component of $D$ and hence $C$ and $D$ are disjoint.

Let now $\alpha\leq \frac{1}{m+2}$ and $0 < \beta\leq \frac{1}{m+2}$ in the first family. In this case the point $(-1,0)$ lies in $\Delta'$ (see Figure~\ref{figure.D} (A)). Now $\zeta=y\xi_{m+1}$ is supported in $\Delta'$ and gives $1\in HC_k$. 

Let $\beta=0$ and $\alpha>0$ in the second family. Now in Figure~\ref{figure.D} (B) all lattice points on the dashed left edge lie in the solid triangle $\Delta'$. The polynomials $(1-y)^{2m+1}$ and $x^{-m+1} y \xi_{m+1} \xi_m$ are supported in $\Delta'$, except their nonzero constant terms. A linear combination of these two polynomials defines $D$, giving $1\in HC_k$.

In the remaining cases we need to prove that $X$ is not a MDS. We assume that $\alpha,\beta>0$ and $\beta> \frac{1}{m+2}$ in the first family. By Proposition~\ref{proposition.m.m-1.enough} and  Lemma~\ref{lemma.char.p}, it suffices to prove that $m\notin HC_k$ (the same proof works to show that $m-1 \notin HC_k$). We will assume that $m>1$ and leave it to the reader to check that $1\notin HC_K$ in the case $m=1$. 

Assume by contradiction that $m\in HC_k$, given by a polynomial $\zeta$ that defines the curve $D$ in class $[mD_0]$. The idea of the proof is as follows. The polytope $m\Delta'$ is not the convex hull of its lattice points. We may thus decrease the size of $m\Delta'$ so that it still supports $\zeta$. In fact, we will construct a new triangle $\tilde{\Delta}$ satisfying:
\begin{enumerate}[(a)]
\item The polynomial $1-y$ defines a negative curve $\tilde{C}$ in $\tilde{X} = \Bl_{t_0} X_{\tilde{\Delta}}$.
\item The polynomial $\zeta$ defines a curve $\tilde{D}$ in $\tilde{X}$ such that $\tilde{C}\cdot \tilde{D} <0$.
\end{enumerate}
These properties give a contradiction to the existence of $\zeta$ as follow. Since $\tilde{C}$ is a negative curve in $\tilde{X}$ that intersects $\tilde{D}$ negatively, it follows that $\tilde{C}$ is a component of $\tilde{D}$, in other words, $1-y$ divides $\zeta$. This implies that the left vertex of $m\Delta'$ cannot lie in the support of $\zeta$, hence $D$ passes through the $T$-fixed point corresponding to that vertex. However, $C$ also passes through that point, hence $C$ and $D$ intersect, giving a contradiction.

Consider first the triangle $\Delta^0$ (recall that this is the triangle $\Delta$ with $\alpha=\beta=0$). We modify the slope of its left edge  and ask how large does this slope have to be so that $1-y$ defines a negative curve in the blowup of the resulting toric variety.  If $\overline{\Delta}$  is the new triangle then the correct condition is that the height of $\overline{\Delta}$ is greater than its width. Here the height is measured vertically from the top vertex to the bottom edge and width is measured horizontally from the left vertex to the right vertex. A simple calculation shows that the slope needs to be greater than $m+1+\frac{1}{m}$ for the first family and $2+\frac{1}{m(m+1)}$ for the second family.

    \begin{figure}
    \centering 
      \scalebox{.9}{\begin{tikzpicture}[
    scale=0.6,
    axis/.style={ ->, >=stealth'},
    important line/.style={very thick},
    ]

    % x-axis
    \draw[axis] (-5,0) -- (10,0) node(xline)[right] {};    
    %y-axis
    \draw[axis] (0,-2) -- (0,13) node(yline)[above] {};
    %Nodes and tags
    \foreach \Point in {(-4, -1), (-4, 0), (-3, -1), (-3, 0), (-3, 1), (-3, 2), (-3, 3), (-2, 0), (-2, 1), (-2, 2), (-2, 3), (-2, 4), (-2, 5), (-2, 6), (-1, 0), (-1, 1), (-1, 2), (-1, 3), (-1, 4), (-1, 5), (-1, 6), (-1, 7), (-1, 8), (-1, 9), (0, 0), (0, 1), (0, 2), (0, 3), (0, 4), (0, 5), (0, 6), (0, 7), (0, 8), (0, 9), (0, 10), (0, 11), (0, 12), (1, 1), (1, 2), (1, 3), (1, 4), (1, 5), (1, 6), (1, 7), (1, 8), (1, 9), (1, 10), (1, 11), (2, 1), (2, 2), (2, 3), (2, 4), (2, 5), (2, 6), (2, 7), (2, 8), (2, 9), (2, 10), (3, 1), (3, 2), (3, 3), (3, 4), (3, 5), (3, 6), (3, 7), (3, 8), (3, 9), (4, 2), (4, 3), (4, 4), (4, 5), (4, 6), (4, 7), (4, 8), (5, 2), (5, 3), (5, 4), (5, 5), (5, 6), (5, 7), (6, 2), (6, 3), (6, 4), (6, 5), (6, 6), (7, 3), (7, 4), (7, 5), (8, 3), (8, 4), (9, 3)}
    \draw[fill=black] \Point circle (0.06);
    % \foreach \Point in {
    %   (0,-1),(-1,0),(-1,-1)
    % }
    %   \draw \Point circle (0.08);
%    \node at (-2,0) {$\bigstar$};
%    \node at (-2,1) {$\bigstar$};
    \coordinate (A) at (-9/2,-3/2);
    \coordinate (B) at (0,12);
    \coordinate (C) at (9,3);
    \coordinate (betamax) at (-36/11,-12/11);
    \coordinate (left) at (-3,-1);
    \coordinate (AA) at (-2,-2);
    \coordinate (a0) at (-5,3);
    \coordinate (a1) at (-3,0);

    \draw[-,dashed] (B) -- (betamax) node[midway,above,sloped] {};

    %Below we define an invisible path between right and left vertex and the point ID which is the intersection by a line of slope -1 passing through "left" and this line.
    \draw [draw = none, name path=invisible] (A) -- (C);
    \draw [draw = none, name path=AA--left] (AA) -- (left);
    \draw [name intersections={of=AA--left and invisible,by=ID}];

    % ARROWS
    \node at (-5,3) [rectangle] (b0) {$\beta=\frac{1}{m+2}$};
    \node at (-3,0) [rectangle] (b1) {};
    \node at (-5,4) [rectangle] (a0) {$\beta=0$};
    \node at (-3,3) [rectangle] (a1) {};
    \draw [->,thick] (a0) to [bend left =20] (a1);
    \def\myshift#1{\raisebox{-2.1ex}}
    \draw [->,thick] (b0) to [bend right =10] (b1);

%The following code drew an arch between two points. Not using it anymore.
    
    % \draw [<->, thick]
    % let
    % \p0=(C),
    % \p1=(left),
    % \p2=(ID),
    % \n1={veclen(\y1-\y0,\x1-\x0)},
    % \n2 = {atan2(\y1 - \y0, \x1 - \x0)},
    % \n3 = {atan2(\y2 - \y0, \x2 - \x0)}
    % in
    % %(C) circle (\n1)
    % (left) arc[radius = \n1, start angle = \n2, end angle = \n3];

    %triangle
    \draw [dashed] (A) -- (B) -- (C) -- cycle;
    \draw (B) -- (C) -- (left) -- cycle;
    %alphas
%    \node[rectangle, rounded corners,fill=white, left=0.1cm] at (left) {\scriptsize $\alpha=\frac{1}{m+2}$};
%    \node[rectangle,rounded corners,below=0.1cm,fill=white] at (ID) {\scriptsize $\alpha=0$};
    % coordinates
%    \node[circle,fill=white,inner sep=0pt,minimum size=15pt] (sadf) at (-1.2,1/3) {};
%    \node[circle,fill=white,inner sep=0pt,minimum size=15pt] (sadf) at (-0.6,1/3) {};
%    \node[circle,fill=white,inner sep=0pt,minimum size=15pt] (sadf) at (-1,1/3) {};
%   \node[above right] at (-2,0) {\tiny $(-m+1,0)$};
%   \node[rectangle, rounded corners, fill=white,left=0.2cm, align=left] at (-2,1) {\tiny First lattice point\\ \tiny $(-m+1,m-2)$};
    \node[rectangle,rounded corners, below] at (-3,-1) {$Q$};
    \node[rectangle, rounded corners,right] at (C) {\small $(m^2,m)$};
    \node[rectangle, rounded corners,right] at (B) {\small $\,(0,m^2+m)$};

\end{tikzpicture}}
      \caption{Triangle $\tilde\Delta$ for $m=3$ in the first family.}
      \label{figure.E1}
    \end{figure}

    \begin{figure}
    \centering 
    \scalebox{.9}{\begin{tikzpicture}[
    scale=0.6,
    axis/.style={ ->, >=stealth'},
    important line/.style={very thick},
    ]

    % x-axis
    \draw[axis] (-7,0) -- (5,0) node(xline)[right] {};    
    %y-axis
    \draw[axis] (0,-3) -- (0,11) node(yline)[above] {};
    %Nodes and tags
    \foreach \Point in {(-6, -3), (-6, -2), (-5, -2), (-5, -1), (-5, 0), (-4, -2), (-4, -1), (-4, 0), (-4, 1), (-4, 2), (-3, -1), (-3, 0), (-3, 1), (-3, 2), (-3, 3), (-3, 4), (-2, -1), (-2, 0), (-2, 1), (-2, 2), (-2, 3), (-2, 4), (-2, 5), (-2, 6), (-1, 0), (-1, 1), (-1, 2), (-1, 3), (-1, 4), (-1, 5), (-1, 6), (-1, 7), (-1, 8), (0, 0), (0, 1), (0, 2), (0, 3), (0, 4), (0, 5), (0, 6), (0, 7), (0, 8), (0, 9), (0, 10), (1, 1), (1, 2), (1, 3), (1, 4), (1, 5), (1, 6), (1, 7), (1, 8), (2, 1), (2, 2), (2, 3), (2, 4), (2, 5), (2, 6), (3, 2), (3, 3), (3, 4), (4, 2)}
    \draw[fill=black] \Point circle (0.06);
    % \foreach \Point in {
    %   (0,-1),(-1,0),(-1,-1)
    % }
    %   \draw \Point circle (0.08);
    \coordinate (A) at (-20/3,-10/3);
    \coordinate (B) at (0,10);
    \coordinate (C) at (4,2);
    \coordinate (Q) at (-6,-3);

    %triangle
    \draw [dashed] (A) -- (B) -- (C) -- cycle;
    \draw (Q) -- (B) -- (C) -- cycle;

    % coordinates
    \node[rectangle, rounded corners,right] at (C) {\small $(m^2,m)$};
    \node[rectangle, rounded corners,right] at (B) {\small $(0,(2m+1)m)$};
    \node[rectangle, rounded corners, below] at (Q) {$Q$};

\end{tikzpicture}}
      \caption{Triangle $\tilde\Delta$ for $m=2$ in the second family.}
      \label{figure.E2}
    \end{figure}
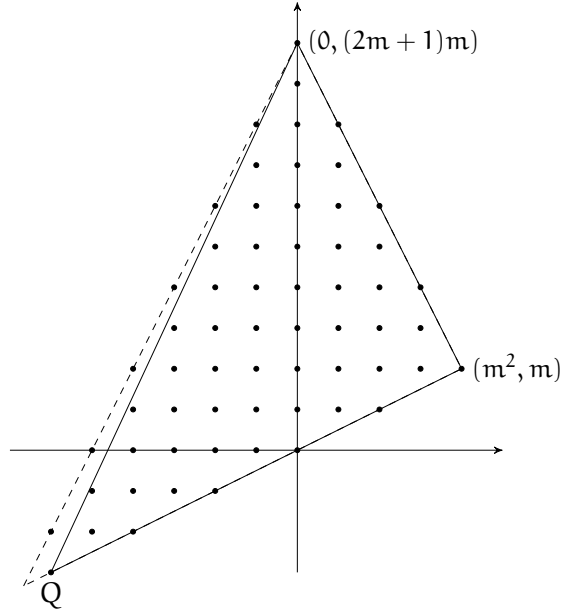
    
    Let us now construct the triangles $\tilde{\Delta}$. We start with the triangle $m\Delta'$. At first step we change the lower edge so that we are in the case $\alpha=0$. This step makes the triangle larger, hence the new triangle still supports $\zeta$. In the second step we pivot the left edge of the triangle about the top vertex. We make the slope as steep as possible so that the triangle still supports $\zeta$ and call the resulting triangle $\tilde{\Delta}$. (See Figures~\ref{figure.E1} and \ref{figure.E2}.) Let us consider the two families separately.
\begin{enumerate}
\item In the first family the assumption $\beta>\frac{1}{m+2}$ implies that the left edge has slope greater than $m+1$. If the slope is exactly $m+1$ then the lattice points that lie on the left edge are
\[ P_i = (0,m^2+m)- i(1,m+1), \qquad i=0,1,\ldots, m.\]
Here $P_0$ is the top vertex of the triangle $m\Delta'$ and $P_m = (-m,0)$ is the lowest point on the left edge. Indeed, this follows from the fact that the left vertex of $m\Delta'$ with $\alpha=0$ and $\beta=\frac{1}{m+2}$ has coordinates
\[ -\left( m+\frac{m}{m^2+m-1}, 1+\frac{1}{m^2+m-1}\right).\]
When we pivot the edge then the first lattice point in $m\Delta'$ that the edge hits is 
\[ Q = P_m - (0,1) = -(m,1).\]
If we let the left edge go through the top vertex and the point $Q$ then it has slope equal to $m+1+\frac{1}{m}$. However, the point $Q$  is not in the support of $\zeta$ because it lies on the lower edge of $m\Delta'$ when $\alpha=0$. Since we assumed $\alpha>0$, we can pivot the left edge a bit more to make its slope greater than $m+1+\frac{1}{m}$. Then $1-y$ defines a negative curve in $\tilde{X}$.

\item In the second family we start with the left edge having slope $2$ and again pivot it about the top vertex of the triangle. When the slope is equal to $2$, then the lattice points on the left edge are 
\[ P_i = (0,(2m+1)m) - i( 1,2), \qquad i= 0,1,\ldots, m^2+m,\]
where $P_0$ is the top vertex of the triangle $m\Delta'$ and $P_{m^2+m} = -(m^2+m, m)$ is the lowest lattice point on the edge. To see that $P_{m^2+m}$ really is the lowest lattice point, note that the left vertex of $m\Delta'$ with $\alpha=\beta=0$ has coordinates
\[ -\left(m^2+m+\frac{m}{2m-1}, m+1+\frac{1}{2m-1}\right).\]
 When we pivot the edge then the first lattice point in $m\Delta'$ that the edge hits is 
\[ Q = P_{m^2+m} - (0,1) = -(m^2+m,m+1).\]
If we let the left edge go through the top vertex and the point $Q$ then it has slope equal to $2+\frac{1}{m(m+1)}$. However, again the point $Q$ is not in the support of $\zeta$ because it lies on the lower edge of $m\Delta'$ when $\alpha=0$. Thus, we can pivot the left edge a bit more to make its slope greater than $2+\frac{1}{m(m+1)}$. Then $1-y$ defines a negative curve in $\tilde{X}$.
 \end{enumerate}

With $\tilde{\Delta}$ defined, let us now check that $\tilde{C} \cdot\tilde{D}<0$. Since $\zeta$ is supported in $\tilde{\Delta}$, it defines the curve $\tilde{D}$ with class 
\[ [\tilde{D}] = \pi^* \tilde{H} - m(im+1)E,\]
where $\tilde{H}$ is the class in $\Cl(X_{\tilde{\Delta}})$ corresponding to the triangle $\tilde{\Delta}$. The curve $\tilde{C}$ has class
\[ [\tilde{C}] = \frac{1}{h} \tilde{H} - E,\]      
where $h$ is the height of $\tilde{\Delta}$, measured vertically from the top vertex. The intersection number is now
\[ \tilde{C} \cdot \tilde{D} = \frac{1}{h} \tilde{H}^2 - m(im+1) = \frac{wh}{h} - m(im+1),\]    
where $w$ is the horizontal width of $\tilde{\Delta}$. Thus, we need to prove that $w< m(im+1)$. However, we have 
\[ w < h = m(im+1).\]
This finishes the proof of the main theorem.

%\bibliographystyle{plain}
%\bibliography{cox}

\end{document}